\newtheorem{theorem}{Theorem}[section]
\newtheorem{cor}[theorem]{Corollary}
\newtheorem{lm}[theorem]{Lemma}
\theoremstyle{definition}
\newtheorem{df}[theorem]{Definition}
\theoremstyle{remark}
\numberwithin{equation}{section}
\newif\ifShowLabels
\newdimen\theight
\newcommand\TeXref[1]{%
     \leavevmode\vadjust{\setbox0=\hbox{{\tt
               \quad\quad #1}}%
                    \theight=\ht0
                         \advance\theight by \dp0
                              \advance\theight by \lineskip
                                   \kern -\theight \vbox to
                                             \theight{\rightline{\rlap{\box0}}%
                                                   \vss}%
                                                         }}%
\newcommand{\labelp}[1]{\label{#1}%
    \ifShowLabels \TeXref{{}} \fi}
\renewcommand\L{H}
\newcommand\G[1]{G_{#1}}
\newcommand\conv{{}^{\scriptstyle\smallsmile}}
\newcommand\ident{1\textnormal{\rq}}
\newcommand{\refL}[1]{Lemma~\ref{L:#1}}
\newcommand{\refD}[1]{Definition~\ref{D:#1}}
\newcommand{\refC}[1]{Corollary~\ref{C:#1}}
\newcommand{\refT}[1]{Theorem~\ref{T:#1}}
\newcommand{\refS}[1]{Section~\ref{S:#1}}
\newcommand{\refF}[1]{Figure~\ref{F:#1}}
\newcommand{\mc}[1]{\mathcal{#1}}
\newcommand\cra[2]{\mathfrak{#1}\myshortspace[\mathcal{#2}\,]}
\newcommand{\myshortspace}{{\hspace*{.1pt}}}
\newcommand\f[1]{{\mathfrak {#1}}}
\newcommand\smbcomma{\,,}
\newcommand{\comma}{\textnormal{,}\ }
\newcommand\per{\textnormal{\myspace.\ }}
\newcommand{\myspace}{{\hspace*{.5pt}}}
\newcommand\po{\textnormal{.}\ }     
\newcommand\rcomma{\textnormal{\myshortspace,}}
\newcommand\yoz{\wy;1;\wz}
\newcommand\wy{y}
\newcommand\wz{z}
\newcommand\xoy{\wx;1;\wy}
\newcommand\wx{x}
\newcommand\grp[1]{G_{#1}}
\newcommand\scir{\raise2pt\hbox{$\,\scriptscriptstyle\circ\,$}}
\newcommand\mo{^{-1}}
\newcommand\pair[2]{(#1,#2)}
\newcommand\mce{\mathcal{E}}
\newcommand\h[1]{H_{#1}}
\newcommand\xy{{xy}}
\renewcommand\k[1]{K_{#1}}
\newcommand\xz{{xz}}
\newcommand\tbigcup{\textstyle \bigcup}
\newcommand\seq{\subseteq}
\newcommand\refEq[1]{(\ref{Eq:#1})}
\newcommand\yz{{yz}}
\newcommand\vphi[1]{\vph_{#1}}
\newcommand\vph{\varphi}
\newcommand\vp{\varphi}
\newcommand\hvphs[1]{\hat{\vph}_{#1}}
\newcommand\e[1]{e_{#1}}
\newcommand\yx{{yx}}
\newcommand\vphih[1]{{\hat\vph}_{#1}}
\newcommand\rp{\!\mid\!}
\newcommand\xx{{xx}}
\newcommand\suml{\sum\limits}
\newcommand\R{R}
\newcommand\xoz{\wx;1;\wz}
\newcommand\tprod{\textstyle\prod}
\newcommand\tsum{\textstyle\sum}
\newcommand\xox{\wx;1;\wx}
\newcommand\yox{\wy;1;\wx}
\newcommand\cs[2]{{#1}_{#2}}
\newcommand\di{0\textnormal{{\rq}}}
\begin{document}

\title[Measurable relation algebras with cyclic groups]{A representation
theorem for measurable relation algebras with cyclic groups}
\author{Hajnal Andr\'eka and Steven Givant}
\address{Hajnal Andr\'eka\\Alfr\'ed R\'enyi Institute of
Mathematics\\Hungarian Academy of Sciences\\Re\'altanoda utca 13-15\\
Budapest\\ 1053 Hungary}\email{andreka.hajnal@renyi.mta.hu}
\address{Steven Givant\\Mills College\\5000 MacArthur Boulevard,
Oakland, CA 94613}\email{givant@mills.edu}
\thanks{This research was
partially supported  by Mills College and  the Hungarian National
Foundation for Scientific Research, Grants T30314 and T35192.}
\begin{abstract} A relation algebra is measurable if the identity element is a sum of
atoms, and the square $x;1;x$ of each subidentity atom $x$ is a sum
of non-zero functional elements. These functional elements form a
group $\G x$. We prove that a measurable relation algebra in which
the groups $\G x$ are all finite and cyclic is completely
representable. A structural description of these algebras is also
given.
\end{abstract}

\maketitle

\section{Introduction}\labelp{S:sec1}

Relation algebras were defined by Tarski as a class of abstract
algebras satisfying ten equations, to serve as an algebraic
counterpart of logic. They are Boolean algebras with operators where
the extra-Boolean operators $;,\conv,\ident$ form an involuted
monoid (satisfying some further equations). In particular, the
complex algebra of a group is a relation algebra. Algebras of binary
relations with standard set theoretic Boolean operations and
relation-composition, inverse of a relation and set theoretic
identity relation as the extra-Boolean operations are called
\emph{set relation algebras}. These are relation algebras, and a
relation algebra isomorphic to a set relation algebra is called
\emph{representable}. In particular, the function assigning the
Cayley-representation to each element of a group extends to a
representation of the complex algebra of the group.

Measurable relation algebras were introduced in \cite{ga02}. In a
relation algebra, a subidentity atom $x$ is defined to be measurable
if $x;1;x$ is the sum of functional elements and a relation algebra
is \emph{measurable} if $\ident$ is the sum of measurable atoms.
(Atoms below the identity $\ident$ are called subidentity atoms, and
an element $f$ is called functional if $f\conv ; f\le \ident$.)
Measurable relation algebras are closely related to groups. They are
put together from various groups as follows. The non-zero functional
elements below $x;1;x$ are atoms and they form a group $G_x$ with
identity element $x$ under the operations of $;$ and $\conv$, and
the atoms below $x;1;y$ specify isomorphisms between quotient groups
$G_x\slash H_{xy}$ and $G_y\slash K_{xy}$, when $x,y$ are distinct
subidentity atoms (see \cite[p.53]{ga02}, \cite{giv1} and
\refS{sec4} in this paper).

A group pair is defined as a system of groups $G_x$ with normal
subgroups $H_{xy}$ and $K_{xy}$, and isomorphisms $\varphi_{xy}$
between quotient groups $G_x/H_{xy}$ and $G_y/K_{xy}$, for $x,y\in
I$. When the isomorphisms are linked by certain natural conditions,
we can put together the Cayley-representations of the various groups
occurring in the group pair $\mc F$ to get a set relation algebra
$\cra G F$. A \emph{group frame} is a group pair that satisfies the
natural conditions, given in this paper as \refD{cosfra}. The
algebras $\cra G F$ constructed from group frames are called
(generalized full) \emph{group relation algebras}. Group relation
algebras are all measurable, atomic, complete set algebras, with all
suprema being union in them (i.e., completely representable). For
examples and illustrations see \cite{ga02} and \cite{giv1}.

An immediate question is whether group relation algebras exhaust the
examples of all atomic complete measurable relation algebras.

This paper proves that indeed this is the case when all the groups
$G_x$ in the measurable relation algebra are finite and cyclic
(Representation \refT{rep}). Even the conditions of being atomic and
complete can be omitted: all measurable relation algebras $\f A$
with finite cyclic groups are atomic and \emph{essentially}
isomorphic to group relation algebras. This latter means that the
completion (the minimal complete extension) of $\f A$ is isomorphic
to a group relation algebra. The passage to the completion of $\f A$
does not change the structure of $\f A$, it only fills in any
missing infinite sums that are needed in order to obtain isomorphism
with the necessarily complete full group relation algebra. We note
that an atomic measurable relation algebra is essentially isomorphic
to a group relation algebra if and only if it is completely
representable (\cite[Theorems 7.4, 7.6]{givand3}).

In the case when all the groups are
cyclic, the group frame conditions considerably simplify, see
\refD{index} here. This fact gives measurable relation algebras with
finite cyclic groups an especially clear structural description.

The algebras that come up in Theorem 4.30 of
J\'onsson-Tarski~\cite{jt52} are all measurable with associated
groups being one-element. Hence, the hard direction (ii)
$\Rightarrow$ (i) of \cite[Theorem 4.30]{jt52} follows from our
\refT{rep}.
Algebras with all the associated groups being cyclic of order one or
two also have come up in the literature. We show, in
Lemma~\ref{pair-lem}, that among atomic relation algebras they are
exactly the pair-dense algebras of Maddux~\cite{ma91}. This gives a
possibility for giving a new proof for \cite[Theorem 51]{ma91}.

An extension of Representation \refT{rep} is known. If $\f A$ is a
measurable relation algebra in which the group $\G x$ is a product
of two finite cyclic groups for each subidentity atom $x$, then $\f
A$ is essentially isomorphic to a group relation algebra (this is
announced in \cite[Theorem 6]{ga02}). The proof of this extended
theorem (due to the authors) is complicated, and will not be given
here.

The theorem cannot be extended to the case in which the groups $\G
x$ are allowed to be products of three finite cyclic groups. Indeed,
an example is presented in \cite[pp.56-59]{ga02}, and proved to be
nonrepresentable in \cite[Theorem 5.2]{andgiv1}, of a finite
measurable relation algebra with five measurable atoms $x$ such
that, the group $\G x$ is isomorphic to $\mathbb Z_2\times\mathbb
Z_2\times\mathbb Z_2$, for all $x$.

There are several other instances in which an atomic measurable
relation algebra is essentially isomorphic to a group relation
algebra. This proves to be the case, for example, if there are at
most four measurable atoms in all.  It is also the case if, for
every measurable atom $x$, every chain of normal subgroups of $\G x$
has length at most three (so that there cannot be normal subgroups
$H$ and $K$ of $\G x$ such that $\{x\}\subsetneq H\subsetneq
K\subsetneq \G x$).  Another case is when the normal subgroups
$K_{xy}$ and $H_{yz}$ are always the same. See
\cite[pp.56-59]{ga02}.

As an application of some of these ideas, let us paraphrase Maddux's
terminology by calling a relation algebra $n$-\emph{dense} if it is
measurable, and if, for each measurable atom $x$, the group $\G x$
has cardinality at most $n$.  Every group of cardinality at most $7$
is either cyclic, the product of two cyclic groups, or has no normal
subgroup chains of length more than three.  As a result, we obtain
that every $7$-dense relation algebra is representable, but there
exist $8$-dense relation algebras (with five measurable atoms) that
are not representable.

The above results are summarized in \cite{ga02} (without proofs),
along with detailed motivation and many illustrations. Readers who
wish to learn more about the subject of relation algebras are
recommended to look at the books by Hirsch-Hodkinson\,\cite{hh02},
Maddux\,\cite{ma06}, or Givant\,\cite{giv17}, \cite{giv18}.

\newcommand\kap[2]{m_{{#1}{#2}}}
\newcommand\al{a}
\newcommand\rr[3]{R_{{#1}{#2},{#3}}}
\newcommand\hl[2]{H_{{#1}{#2}}}
\newcommand\hr[2]{K_{{#1}{#2}}}
\newcommand\gl[2]{G_{#1}/H_{{#1}{#2}}}
\newcommand\gr[2]{G_{#2}/K_{{#1}{#2}}}

The remainder of the paper is divided into four sections.
\refS{sec2} reviews  the necessary relation algebraic background for
reading the paper.  Throughout the paper, there are  references to
the earlier papers \cite{giv1}, \cite{andgiv1},  and \cite{givand3};
the results needed from  those papers are explained as they are
encountered. In  \refS{sec3}, a characterization is given, in terms
of a system of simple invariants, of when it is possible to
construct a full group relation algebra from a given system of
mutually disjoint, finite, cyclic groups.  \refS{sec4} discusses the
notion of a regular element---a kind of generalization of the notion
of an atom---and of the index of such an element.  Some important
properties of  indices are formulated and proved.  Finally,
\refS{sec5} is devoted to a proof of the main theorem of the paper,
namely to Representation \refT{rep}.

\section{Relation Algebras}\labelp{S:sec2}
In the next few sections, most of the calculations will involve the
arithmetic of relation algebras.  This section provides a  review of
the essential results that will be needed.

 A relation algebra is an algebra   of the form
\[\f A=( A\smbcomma +\smbcomma
-\smbcomma ;\smbcomma\,\conv\smbcomma\ident)\comma  \]  where
$\,+\,$ and $\,;\,$ are binary operations called \textit{addition}
and \textit{relative multiplication}, while $\,-\,$ and $\,\conv\,$
are unary operations called \textit{complement} and
\textit{converse}, and $\ident$ is a distinguished constant called
the \textit{identity element}, such that ten equational axioms hold
in $\f A$.  The exact nature of these axioms is not important for
the present discussion.  Further operations and relations such as
Boolean multiplication $\,\cdot\,$ and the Boolean partial  order
relation $\,\le\,$ are defined in the standard way. The following
laws, which are either members of Tarski's ten axioms or are
derivable from them, play a role in this paper. For their
derivation, see e.g., \cite{giv17}.

\begin{lm}\labelp{L:laws} If $\f A=( A\smbcomma +\smbcomma
-\smbcomma ;\smbcomma\,\conv\smbcomma\ident)$ is a relation
algebra\comma then $(A\smbcomma +\smbcomma-)$ is a Boolean
algebra\comma and the operation of converse is an automorphism of
this Boolean algebra\per In particular, the following laws hold\per
\begin{enumerate}
\item[(i)] $(a+b)\conv=a\conv + b\conv$\per
\item[(ii)] $(a \cdot b)\conv = a\conv \cdot
b\conv$\po
\item[(iii)]$a \leq b$ if and only if $a\conv \leq
b\conv$\po
\item[(iv)] $a$ is an atom if and only if $a\conv$ is an atom\po
\item[(v)]   $x\conv=x$ and $x;x=x$ whenever $x$ is a subidentity element\po
\end{enumerate}
\end{lm}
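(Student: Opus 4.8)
The plan is to work directly from Tarski's ten axioms, separating the laws that are axioms outright from those needing a short derivation.

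\emph{Boolean part and laws (i)--(iv).} Tarski's axioms include equations (in Huntington's single-equation formulation, one equation, from which one invokes Huntington's theorem) forcing $(A\smbcomma+\smbcomma-)$ to be a Boolean algebra; define $\cdot$, $\le$, $0$, $1$ from $+$ and $-$ in the usual way. Law~(i) is itself one of Tarski's converse axioms, and another axiom gives $a\conv\conv=a$, so $\conv$ is a bijection of $A$ onto itself, being its own inverse. I would then deduce (iii) first: if $a\le b$ then $a+b=b$, so $a\conv+b\conv=(a+b)\conv=b\conv$ by (i), i.e. $a\conv\le b\conv$; the reverse implication follows by applying this to $a\conv,b\conv$ and using $a\conv\conv=a$. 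Thus $\conv$ is an order-automorphism of the Boolean lattice, hence preserves joins, meets, $0$ and $1$, and therefore complements (since $-a$ is characterized by $a\cdot-a=0$ and $a+-a=1$). Concretely (ii) is seen directly: $a\cdot b\le a,b$ gives $(a\cdot b)\conv\le a\conv\cdot b\conv$ by (iii), and the reverse comes by applying $\conv$ to $a\conv\cdot b\conv\le a\conv,b\conv$. Law~(iv) is immediate, since an order-automorphism carries minimal nonzero elements to minimal nonzero elements and fixes $0$.

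\emph{The subidentity law (v).} Here I would first record three preliminaries. (a)~$\ident\conv=\ident$ and $\ident$ is a two-sided identity: from $a;\ident=a$ and the converse axioms, $\ident\conv$ is a left identity, whence $\ident\conv=\ident\conv;\ident=\ident$. (b)~$;$ is monotone in each argument: left monotonicity is immediate from the distributive axiom $(a+b);c=a;c+b;c$, and right monotonicity follows from it together with (iii) and the axiom $(a;b)\conv=b\conv;a\conv$. Now fix a subidentity element $x$; by (iii) and (a), $x\conv\le\ident$, so $x\conv$ is subidentity as well. For $x;x=x$: monotonicity gives $x;x\le\ident;x=x$; for the reverse, suppose $d:=x\cdot-(x;x)\neq 0$, so $d\le x$ and $d\le-(x;x)$. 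Then $d;d\le x;x$ by monotonicity, while $d;d\le\ident;d=d\le-(x;x)$, so $d;d\le x;x\cdot-(x;x)=0$; hence $d=d;\ident\le d;1=d;d+d;-d=d;-d\le\ident;-d=-d$, forcing $d\le-d$ and so $d=0$, a contradiction. For $x\conv=x$: here I would invoke Tarski's remaining axiom (the De Morgan--Tarski law) $a\conv;-(a;b)\le-b$. Put $e:=x\conv\cdot-x$, a subidentity element; by (ii) and complement-preservation of $\conv$, $e\conv=x\cdot-(x\conv)$, so $e\le-x$ and $e\conv\le x$, whence $e\cdot e\conv=0$; monotonicity (using $e,e\conv\le\ident$) then gives $e;e\conv\le e$ and $e;e\conv\le e\conv$, so $e;e\conv=0$. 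Applying the De Morgan--Tarski law with $a=e$, $b=e\conv$ yields $e\conv;1=e\conv;-(e;e\conv)\le-e\conv$, while $e\conv=e\conv;\ident\le e\conv;1$; hence $e\conv\le-e\conv$, so $e=0$, i.e. $x\conv\le x$. Applying this to the subidentity element $x\conv$ gives $x\le x\conv$, so $x=x\conv$.

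\emph{Main obstacle.} Everything outside (v) is either an axiom verbatim or elementary Boolean-algebra and order reasoning; the genuine content is (v), and the unavoidable ingredient there is the De Morgan--Tarski law $a\conv;-(a;b)\le-b$ (the one Tarski axiom that is neither Boolean nor purely monoid-theoretic). The thing to be careful about is bookkeeping: one must establish the identity-element facts (a) and the monotonicity of $;$ before attempting (v), and then each half of (v) falls out of the short contradiction argument above.
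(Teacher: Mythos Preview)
The paper does not actually prove this lemma: it is stated as background, prefaced by the remark that these laws ``are either members of Tarski's ten axioms or are derivable from them,'' with a pointer to \cite{giv17} for derivations. Your argument is correct and supplies precisely the kind of derivation that reference would contain. The Boolean-automorphism facts (i)--(iv) are handled cleanly, and your treatment of (v) correctly isolates the Tarski--De~Morgan law $a\conv;-(a;b)\le-b$ as the essential non-Boolean ingredient; the contradiction arguments for $x;x=x$ and $x\conv=x$ both go through.

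One small bookkeeping remark: in the chain $d;1=d;d+d;-d$ you are using \emph{right} distributivity of $;$ over $+$, which (unlike the axiom $(a+b);c=a;c+b;c$) is not itself one of Tarski's axioms. It follows immediately from left distributivity together with the two involution laws by the same converse trick you used to obtain right monotonicity, but since you were careful to flag that derivation for monotonicity, you may want to flag it here too.
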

Properties (i) and (iii) are called the \emph{distributive} and
\emph{monotony laws} for converse. \goodbreak

\begin{lm}\labelp{L:laws.1}
 \begin{enumerate}
\item[(i)]
$r;(s;t)=(r;s);t$\per
\item[(ii)]
$r;\ident=r$\per \item[(iii)] $r\conv\conv=r$\per
\item[(iv)]
$(r;s)\conv=s\conv;r\conv$\per \item[(v)] $(r+s);t=r;t +s;t$\per
 \item[(vi)] If $a \leq b$ and $c\leq d$\rcomma\ then
$a;c \leq b;d$\po
\end{enumerate}
\end{lm}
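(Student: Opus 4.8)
The plan is to dispatch (i)--(v) at once by observing that, in the axiomatization used for this paper, each is either a member of Tarski's ten axioms or a one-step rewriting of one: (i) is the associativity of $\,;\,$, (ii) is the right identity law $r;\ident=r$, (iii) says converse is an involution, (iv) says converse reverses relative products, and (v) is the right distributivity of $\,;\,$ over $\,+\,$. Hence nothing needs to be proved for them beyond citing the axiom list (see \cite{giv17}). All the content is in (vi), and even there the only subtlety is that we are handed \emph{right} distributivity in (v) but must recover \emph{left} distributivity from the converse laws.

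So first I would derive the left distributive law $t;(r+s)=t;r+t;s$. Using (iii) and (iv) to rewrite each side as the converse of a relative product, we compute
\[
t;(r+s)=\bigl((r+s)\conv;t\conv\bigr)\conv=\bigl((r\conv+s\conv);t\conv\bigr)\conv=\bigl(r\conv;t\conv+s\conv;t\conv\bigr)\conv=(r\conv;t\conv)\conv+(s\conv;t\conv)\conv=t;r+t;s,
\]
where the first and last equalities use (iii) and (iv), the second and fourth use the distributivity of converse over $\,+\,$ from \refL{laws}(i), and the third uses (v).

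With both distributive laws in hand, I would then establish the two one-sided monotonicity facts separately. If $a\le b$, then $a+b=b$ in the Boolean reduct $(A,+,-)$ (a Boolean algebra by \refL{laws}), so $b;c=(a+b);c=a;c+b;c$ by (v), giving $a;c\le b;c$; symmetrically, if $c\le d$ then $c+d=d$, and the left distributive law gives $a;d=a;(c+d)=a;c+a;d$, so $a;c\le a;d$. Assuming now $a\le b$ and $c\le d$, transitivity of $\le$ yields $a;c\le b;c\le b;d$, which is (vi).

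I do not expect a genuine obstacle in this lemma; the one point requiring care is not to invoke left distributivity as though it were on Tarski's list, but to derive it from (iii), (iv), (v), and \refL{laws}(i) as above. The remaining manipulations all take place inside the Boolean reduct and are entirely routine.
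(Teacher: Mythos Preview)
Your proposal is correct. The paper itself does not give a proof of this lemma at all: the sentence preceding \refL{laws} (``The following laws, which are either members of Tarski's ten axioms or are derivable from them, play a role in this paper. For their derivation, see e.g., \cite{giv17}.'') covers both \refL{laws} and \refL{laws.1}, and no proof environment is opened for either. So you have in fact done more than the paper does, supplying an explicit derivation of (vi) from (i)--(v) and \refL{laws}(i) rather than deferring to \cite{giv17}; your derivation of left distributivity and the two-step monotony argument are standard and correct.
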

These properties are commonly referred to by the following names:
the \textit{associative law for relative multiplication},   the
(right-hand) \textit{identity law for relative multiplication},  the
\textit{first involution law},   the \textit{second involution law},
the (right-hand) \textit{distributive law for relative
multiplication}, and the \emph{monotony law for relative
multiplication}. An element $x$ in $\f A$ is called a
\textit{subidentity element} if it is below the identity element, in
symbols $x\le \ident$. Whenever parentheses indicating the order of
performing operations are  lacking, it is understood that unary
operations have priority over binary operations, and multiplications
have priority over addition.

A \textit{square}   is an element of the form $x;1;x$ for some
subidentity element $x$, and a \textit{rectangle} is an element of
the form $x;1;y$ for some subidentity elements $x$ and $y$. The
elements $x$ and $y$ are sometimes referred to as the \textit{sides}
of the rectangle\per

\begin{lm}\labelp{L:square} Let $x,y,z,w$ be  subidentity
elements\po\
\begin{itemize}
\item[(i)] $(x;1;y)\conv = y;1;x$\po
\item[(ii)] $(x;1;y);b\leq x;1;z$  for every $b\leq \yoz$\comma  and equality holds whenever
$x$\comma $y$\comma and $z$ are atoms, and $x;1;y$ and $b$ are both non-zero\per
\item[(iii)] If $x$ and $y$ are subidentity atoms, and if $0\le b\le\xoy$, then
$x;b=b;y=b$\per
\end{itemize}
\end{lm}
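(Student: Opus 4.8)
The plan is to handle the three items in turn, isolating one general fact about subidentity elements that does the real work in (iii) and in the nontrivial half of (ii).

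Part (i) is a one-line computation. Writing $x;1;y$ as $(x;1);y$ and applying the second involution law [\refL{laws.1}(iv)] twice gives $(x;1;y)\conv = y\conv;(x;1)\conv = y\conv;1\conv;x\conv$. Since converse is an automorphism of the Boolean reduct [\refL{laws}] it fixes the unit, so $1\conv = 1$, while $x\conv = x$ and $y\conv = y$ because $x,y$ are subidentity [\refL{laws}(v)]; hence $(x;1;y)\conv = y;1;x$. The inequality in (ii) is just as mechanical: from $b\le y;1;z$ and monotony [\refL{laws.1}(vi)] we get $(x;1;y);b\le(x;1;y);(y;1;z)$, and then associativity [\refL{laws.1}(i)], $y;y=y$ [\refL{laws}(v)], $y\le\ident$, the identity law $r;\ident=r$ [\refL{laws.1}(ii)], and $1;1=1$ (from $1;1\ge 1;\ident=1$ and $1$ being the top element) collapse the right-hand side to $x;1;z$.

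The auxiliary fact I would isolate is: \emph{if $u\le\ident$ and $c\le u;1$, then $u;c=c$; dually, if $c\le 1;u$ then $c;u=c$.} One inclusion is immediate, $u\le\ident$ forcing $u;c\le\ident;c=c$; for the reverse I would argue by contradiction. If $d:=c\cdot\overline{u;c}$ were nonzero, then $d\le u;1$, so by the cycle (Peircean) law $(u;1)\cdot d\ne 0$ is equivalent to $u\conv;d\ne 0$, i.e.\ to $u;d\ne 0$ (using $u\conv=u$); but $u;d\le u;c$ and $u;d\le\ident;d=d$, so $u;d\le(u;c)\cdot\overline{u;c}=0$, a contradiction. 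This fact yields (iii) at once: from $b\le x;1;y$ one gets $b\le x;1$ and $b\le 1;y$ (using $x,y\le\ident$), hence $x;b=b$ and $b;y=b$ — and atomicity of $x,y$ is not actually needed here. It also settles the first step of the equality in (ii): since $b\le y;1;z\le y;1$, we have $y;b=b$, so $(x;1;y);b=x;1;(y;b)=x;1;b$.

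What remains for (ii) is to prove $x;1;b=x;1;z$, and here an atom hypothesis is essential — this is the only step I expect to be a real obstacle. I would establish the sublemma: \emph{if $z$ is a subidentity atom and $0\ne u\le z;1$, then $u;1=z;1$} (so, by converse, if $0\ne u\le 1;z$ then $1;u=1;z$). The inclusion $u;1\le z;1$ is clear; for the reverse it is enough to see $z\cdot(u;1)\ne 0$, since $z$ being an atom then forces $z\le u;1$ and hence $z;1\le u;1$; and $z\cdot(u;1)\ne 0$ follows at once from the cycle law, as $(u;1)\cdot z\ne 0$ iff $(z;1\conv)\cdot u\ne 0$, i.e.\ iff $(z;1)\cdot u\ne 0$, with $(z;1)\cdot u=u\ne 0$. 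Applying the converse form to $b$ (legitimate because $b\le y;1;z\le 1;z$, $b\ne 0$, and $z$ is an atom) gives $1;b=1;z$, whence $x;1;b=x;(1;b)=x;(1;z)=x;1;z$, completing (ii). This sublemma is where the atom hypothesis does genuine work, which is also why the equality clause of (ii) — unlike (i), (iii), and the inequality in (ii) — has to assume at least that $z$ is an atom; apart from it everything is formal manipulation with the monoid, Boolean, and converse laws, the only external ingredient being the cycle (Peircean) law (that $(a;b)\cdot c=0$, $(a\conv;c)\cdot b=0$, and $(c;b\conv)\cdot a=0$ are equivalent), a standard consequence of Tarski's axioms.
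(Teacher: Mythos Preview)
Your proof is correct. The paper does not actually prove this lemma: it is stated without proof in \refS{sec2} as standard background material on relation-algebraic arithmetic, with the blanket remark preceding \refL{laws} that derivations can be found in, e.g., \cite{giv17}. So there is no ``paper's own proof'' to compare against.

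Your argument is clean and in fact establishes slightly more than the statement claims. In (iii) you rightly note that atomicity of $x$ and $y$ is never used; and in the equality clause of (ii) your argument only uses that $z$ is a subidentity atom and $b\ne 0$, not that $x,y$ are atoms or that $x;1;y\ne 0$. The isolated auxiliary fact (for $u\le\ident$ and $c\le u;1$ one has $u;c=c$) and the sublemma (for a subidentity atom $z$ and $0\ne u\le 1;z$ one has $1;u=1;z$) are both correct applications of the cycle law, and together they make the structure of the argument transparent.
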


\section{Group relation algebras with finite cyclic groups}\labelp{S:sec3}

\newcommand\hvps[1]{\cc{\hat\vp}{#1}}
\renewcommand\al{a}
\newcommand\map[2]{m_{{#1}{#2}}}
\renewcommand\rr[3]{R_{{#1}{#2},{#3}}}
\renewcommand\hl[2]{H_{{#1}{#2}}}
\renewcommand\hr[2]{K_{{#1}{#2}}}
\newcommand\zn[1]{{\mathbb Z}_{n_{#1}}}
\renewcommand\gl[2]{G_{#1}/H_{{#1}{#2}}}
\newcommand\cc[2]{{#1}_{#2}}
\renewcommand\gr[2]{G_{#2}/K_{{#1}{#2}}}

Let $G=\langle\grp x:x\in I\,\rangle$ be a system of pairwise
disjoint, finite, cyclic groups $(\G x\smbcomma \scir\smbcomma
{}\mo\smbcomma e_x)$,   and \[m=\langle\map  xy:\pair xy\in \mc
E\rangle\comma\] a system of positive integers indexed by  an
equivalence relation $\mce$ on the index set $I$. We may assume that
for each $x$ in $I$, the cyclic group $\G x$ is a copy of the cyclic
group $\zn x$ of integers modulo $\cc nx$ for some positive integer
$\cc nx$. We shall usually act and write as if the two groups were
identical, although technically it is important to pass to a copy of
$\zn x$ in order to achieve the assumed disjointness of the groups
in the system  $G$. The greatest common divisor of two numbers $m,n$
is denoted by $\gcd(m,n)$\per For the following definition see also
\cite[p.51]{ga02}.

\begin{df} \label{D:index} The system of indices
\[m=\langle\map  xy:\pair xy\in \mc E\rangle\comma\]
is said to satisfy the \emph{index conditions} if the following conditions  hold.
\begin{enumerate}
\item[(i)] $ \map  x y$ is a common divisor of the orders of $\grp x$
and $\grp y$\per
\item[(ii)] $ \map  x x$ is equal to the order of $\grp x$\per
\item[(iii)] $ \map  y x = \map  x y$\per
\item[(iv)] $\gcd( \map  x y,\map  y z) =  \gcd( \map  x y,\map  x
z)=\gcd( \map  x z,\map  y z)$\per
\end{enumerate}
\end{df}

Assume  $m$ does satisfy the index conditions, and write
 \[d=\gcd\pair {\map  xy}{\map  xz}=\gcd\pair {\map  xy}{\map  yz}\per\]
This equation holds because of index condition (iv). For each pair
$\pair xy$ in $\mc E$, take $\h\xy$ and $\k \xy$  be the respective
subgroups of $\G x$ and $\G y$ that consist  of the multiples of
$\map  xy$. This definition makes sense because of the first index
condition. It follows that $\h\xy$ and $\k\xy$ have index $\map  xy$
in $\G x$ and $\G y$ respectively, that is to say, they each have
$\map  xy$ cosets, in symbols,
\[\map  xy=|\G x/\h\xy|=|\G x/\k\xy|\per\] Notice that these
subgroups are always normal, since the groups in the system $G$ are
all cyclic, and hence Abelian. Because $\map  xy$ is the index  in
$\G x$ of the subgroup $\h\xy$, this  subgroup must consist of the
multiples of the integer $\map  xy$ modulo $\cc nx$.  In particular,
the cosets of $\h \xy$ are the  sets of the form
\[\h\xy+\ell=\{p\map  xy+\ell:p<\cc nx/\map  xy\}\] for $\ell<\map  xy$.

The composite group \[\h\xy\scir\h\xz =\{h\scir k: h\in\h\xy\text{
and }k\in\h\xz\}\] consists of the multiples of $d$ modulo $\cc nx$,
and the cosets of $\h\xy\scir\h\xz$ are the sets of the form
 \[\h\xy\scir\h\xz+s\] for $0\le s< d$.

\begin{lm} \labelp{L:lm1}
For each integer $s$ with $0\le s<d$\comma \begin{align*}
\h\xy\scir\h\xz+s&=\tbigcup\{\h\xy+\ell:0\le \ell < \map  xy\text{ and }\ell\equiv s\mod d\}\\
&=\tbigcup\{\h\xy+qd +s:q<\map  xy/d\}\per\\
\end{align*}
\end{lm}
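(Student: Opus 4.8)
The plan is to argue entirely inside the cyclic group $\G x\cong\zn x$, using the concrete descriptions already recorded above: $\h\xy$ is the set of multiples of $\map xy$ modulo $\cc nx$, while $\h\xy\scir\h\xz$ is the set of multiples of $d$ modulo $\cc nx$. Since $d=\gcd(\map xy,\map xz)$ divides $\map xy$, we have $\h\xy\seq\h\xy\scir\h\xz$, and $\map xy/d$ is a positive integer.

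First I would check that the two unions on the right coincide. For $0\le\ell<\map xy$, the congruence $\ell\equiv s\pmod d$ holds exactly when $\ell=qd+s$ for some integer $q$; and the constraints $0\le qd+s<\map xy$ together with $0\le s<d$ and $d\mid\map xy$ force $0\le q<\map xy/d$. Hence $\{\,\h\xy+\ell:0\le\ell<\map xy,\ \ell\equiv s\pmod d\,\}=\{\,\h\xy+qd+s:q<\map xy/d\,\}$, so the two unions are literally the same family.

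For the principal identity I would use that $\h\xy$ is a subgroup of $\h\xy\scir\h\xz$, so every coset of $\h\xy\scir\h\xz$ is a disjoint union of cosets of $\h\xy$, and a coset of $\h\xy$ is either contained in, or disjoint from, that coset of $\h\xy\scir\h\xz$. Now $\h\xy+\ell\seq\h\xy\scir\h\xz+s$ iff $\ell\in\h\xy\scir\h\xz+s$, i.e.\ iff $\ell-s$ is a multiple of $d$ modulo $\cc nx$, i.e.\ iff $\ell\equiv s\pmod d$. As $\ell$ runs over $0\le\ell<\map xy$ — a full set of coset representatives for $\h\xy$ in $\G x$ — the cosets $\h\xy+\ell$ with $\ell\equiv s\pmod d$ are pairwise distinct and are exactly the cosets of $\h\xy$ lying inside $\h\xy\scir\h\xz+s$; their union therefore reproduces $\h\xy\scir\h\xz+s$, which is the asserted equality.

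There is no real obstacle here; the only point needing a moment's care is to take the canonical representatives $0\le\ell<\map xy$ so that each coset of $\h\xy$ contained in $\h\xy\scir\h\xz+s$ is listed once and only once — equivalently, that $[\h\xy\scir\h\xz:\h\xy]=\map xy/d$. If one prefers, this can be turned around: the inclusion of the right-hand side in $\h\xy\scir\h\xz+s$ is immediate from $\ell\equiv s\pmod d$, and then a cardinality count, $|\h\xy\scir\h\xz+s|=\cc nx/d=(\map xy/d)\cdot(\cc nx/\map xy)$, equal to the size of the disjoint union of $\map xy/d$ cosets of $\h\xy$, forces equality.
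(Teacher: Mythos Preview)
Your proof is correct and follows essentially the same route as the paper: both first identify the two unions via the division algorithm, then use the inclusion $\h\xy\seq\h\xy\scir\h\xz$ together with a partition/cardinality count to obtain the main equality. Your phrasing via the standard fact that a coset of the larger subgroup decomposes into cosets of the smaller one is slightly more structural, but the underlying argument is the same.
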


\begin{proof} Each non-negative integer $\ell <\map  xy$ can be written in
one and only one way in the form
\[\ell=qd+s\] for some integers $s$ and $q$ satisfying $0\le s<d$ and
$0\le q<\map  xy/d$, by the division algorithm for integers, so the second
equality of the lemma is clear.

Observe that $\h\xy$ is included in the composite group $\h\xy\scir\h\xz$.
Also, $d$ gen\-er\-ates the composite group, so $qd$ is in the composite group
for every integer $q$ with $0\le q < \map  xy/d$.  Combine these observations to see that
\[\h\xy + qd\seq\h\xy\scir\h\xz\comma\] and therefore
\begin{equation*}
  \h\xy +qd+s\seq\h\xy\scir\h\xz +s\comma
\end{equation*} for every $s<d$ and every $q<\map  xy/d$. It follows that
\begin{equation*}\tag{1}\labelp{Eq:lm1.1}
 \tbigcup\{ \h\xy +qd+s:q<\map  xy/d\}\seq\h\xy\scir\h\xz +s\comma
\end{equation*} for every $s<d$.

The cosets that make up the union on the left side of \refEq{lm1.1}
partition $\G x$ as $q$ and $s$ vary, since there are assumed to be
$\map  xy$ such cosets,   one for each $\ell=qd+s<\map  xy$.  Also
the cosets on the right partition $\G x$ as $s$ varies.  It follows
that equality must hold in \refEq{lm1.1}.  In more detail, if $f$
belongs to the right side of \refEq{lm1.1}, then $f\equiv s\mod d$.
Consequently, $f $ cannot belong to any of the cosets $\h\xy +qd+t$
for $0\le t<d$ and $t\neq s$, since the elements in these cosets are
congruent to $t$ modulo $d$.  Thus, $f$ must belong to $\h\xy +qd+s$
for some $q$ with $0\le q<\map  xy/d$.
\end{proof}

One sees in a similar fashion that the subgroup $\k\xy$ has $\cc
ny/\map  xy$ elements and   $\map  xy$ cosets, which  have the form
$\k\xy +\ell$ for $\ell<\map  xy$.  The composite subgroup $\k
\xy\scir\h\yz$ is generated by $d$, and has $\cc ny/d$ elements and
$d$ cosets \[\k\xy\scir\h\yz+s\] for $0\le s<d$.  The proof of the
next lemma is very similar to that of \refL{lm1}, and will therefore
be omitted.

\begin{lm} \labelp{L:lm2}
For each integer $s$ with $0\le s<d$\comma
\begin{align*}
\k\xy\scir\h\yz+s&=\tbigcup\{\k\xy+\ell:0\le \ell < \map  xy\text{ and }\ell\equiv s\mod d\}\\
&=\tbigcup\{\k\xy+qd +s:q<\map  xy/d\}\per
\end{align*}
\end{lm}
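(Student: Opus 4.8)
The plan is to imitate the proof of \refL{lm1} essentially verbatim, replacing the subgroup $\h\xy$ of $\G x$ by the subgroup $\k\xy$ of $\G y$ and replacing the composite subgroup $\h\xy\scir\h\xz$ of $\G x$ by the composite subgroup $\k\xy\scir\h\yz$ of $\G y$. The key numerical facts, already assembled in the paragraph preceding the lemma, are that $\k\xy$ has $\map xy$ cosets in $\G y$, that $\k\xy\scir\h\yz$ is generated by $d$ and has exactly $d$ cosets in $\G y$, and that $\map xy/d=\map xz/d$ is an integer by index condition~(iv). The second displayed equality is once again just the division algorithm: every non-negative integer $\ell<\map xy$ has a unique representation $\ell=qd+s$ with $0\le s<d$ and $0\le q<\map xy/d$.

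For the first equality, I would argue the two inclusions separately, exactly as in \refL{lm1}. Since $\k\xy\seq\k\xy\scir\h\yz$ and $d$ generates $\k\xy\scir\h\yz$, we get $\k\xy+qd\seq\k\xy\scir\h\yz$ for every $q$ with $0\le q<\map xy/d$, hence $\k\xy+qd+s\seq\k\xy\scir\h\yz+s$ for all such $q$ and all $s<d$, and therefore
\[
 \tbigcup\{\k\xy+qd+s:q<\map xy/d\}\seq\k\xy\scir\h\yz+s\comma
\]
for every $s<d$. For the reverse inclusion I would use the same counting/partition argument: the cosets $\k\xy+qd+s$, as $q$ and $s$ range over their allowed values, are the $\map xy$ distinct cosets of $\k\xy$ in $\G y$ and so partition $\G y$; the cosets $\k\xy\scir\h\yz+s$ for $0\le s<d$ also partition $\G y$; and an element $f$ of $\k\xy\scir\h\yz+s$ satisfies $f\equiv s\bmod d$, so it cannot lie in any coset $\k\xy+qd+t$ with $t\ne s$, forcing $f\in\k\xy+qd+s$ for some admissible $q$. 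This yields equality.

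There is no real obstacle here; the only point requiring a moment's care—and the reason the analogy with \refL{lm1} is exact rather than merely suggestive—is that the composite subgroup on the left of the first line is $\k\xy\scir\h\yz$ (a subgroup of $\G y$ built from $\k\xy$ and $\h\yz$), not $\k\xy\scir\k\xz$; one must check that this composite is still generated by $d=\gcd(\map xy,\map yz)$, which is precisely the content of index condition~(iv) together with the fact that $\h\yz$ consists of the multiples of $\map yz$ modulo $\cc ny$. Once that identification is in place, the coset count is $d$ and $\map xy/d$ is an integer, and the argument of \refL{lm1} transfers with no further changes. For this reason the authors are justified in omitting the details.
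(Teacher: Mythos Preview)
Your proposal is correct and follows exactly the route the paper intends: the paper omits the proof, stating that it is very similar to that of \refL{lm1}, and your transcription of that argument with $\k\xy$, $\h\yz$, and $\G y$ in place of $\h\xy$, $\h\xz$, and $\G x$ is accurate. One small slip worth correcting: the parenthetical claim that $\map xy/d=\map xz/d$ is not true in general (e.g.\ $\map xy=6$, $\map xz=4$, $\map yz=2$, $d=2$), but all you actually need is that $d\mid\map xy$, which is immediate from $d=\gcd(\map xy,\map yz)$.
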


Define a mapping $\vphi\xy$ from $\G x/\h \xy$ to $\G y/\k \xy$  by
\[\vphi\xy(\h\xy +\ell)=\k\xy +\ell\]for $0\le \ell<\map  xy$.
The mapping is certainly a bijection, by the preceding remarks,
and it maps the generator $\h\xy +1$ of the quotient group $\G x/\h\xy$
to the generator $\k\xy +1$ of the quotient group $\G y/\k\xy$, so it must
be an isomorphism, as is easy to check directly.  This isomorphism induces
an isomorphism $\hvps \xy$ from $\G x/(\h\xy\scir\h\xz)$ to $\G y/(\k\xy\scir\h\yz)$.

\begin{lm} \labelp{L:lm3}
 $\hvphs\xy(\h\xy\scir \h\xz + s)=\k\xy\scir \h\yz + s$ for   $0\le s< d$\per
\end{lm}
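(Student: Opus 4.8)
The plan is to show that the stated formula for $\hvphs\xy$ is forced by the way $\hvphs\xy$ was induced from $\vphi\xy$, so that the content of the lemma is really a bookkeeping identity about quotients of quotients. First I would make explicit the inducing mechanism: since $\h\xy\seq\h\xy\scir\h\xz$ (and likewise $\k\xy\seq\k\xy\scir\h\yz$) and $\vphi\xy$ carries $\h\xy\scir\h\xz$, viewed as a subgroup of $\G x/\h\xy$, onto $\k\xy\scir\h\yz$, viewed as a subgroup of $\G y/\k\xy$, the third isomorphism theorem gives a well-defined isomorphism $\hvphs\xy$ from $(\G x/\h\xy)\big/\bigl((\h\xy\scir\h\xz)/\h\xy\bigr)\cong\G x/(\h\xy\scir\h\xz)$ onto the corresponding quotient on the $\G y$ side, acting by $\hvphs\xy\bigl((\h\xy\scir\h\xz)+g\bigr)=(\k\xy\scir\h\yz)+\vphi\xy(\h\xy+g)$ for $g\in\G x$. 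So the whole proof reduces to checking that when $g$ is taken to be the integer $s$ with $0\le s<d$, the right side equals $\k\xy\scir\h\yz+s$.

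Next I would verify that last point using \refL{lm1} and \refL{lm2}. For $0\le s<d$ the coset $\h\xy+s$ is one of the $\map xy$ cosets of $\h\xy$ (since $s<d\le\map xy$), so $\vphi\xy(\h\xy+s)=\k\xy+s$ by the definition of $\vphi\xy$. Hence the right side above is $(\k\xy\scir\h\yz)+s$, which is exactly the coset on the right side of \refL{lm2} for this $s$. Combining, $\hvphs\xy(\h\xy\scir\h\xz+s)=\k\xy\scir\h\yz+s$, as claimed. The only subtlety is the identification $\h\xy\scir\h\xz+s$ (as written in the lemma, a coset of the composite subgroup in $\G x$) with the element $(\h\xy\scir\h\xz)+s$ of the iterated quotient $(\G x/\h\xy)\big/\bigl((\h\xy\scir\h\xz)/\h\xy\bigr)$; \refL{lm1} provides exactly this, describing $\h\xy\scir\h\xz+s$ as the union of those cosets $\h\xy+\ell$ with $\ell\equiv s\bmod d$, i.e.\ as the single element $(\h\xy\scir\h\xz)/\h\xy$-coset of $\h\xy+s$ in $\G x/\h\xy$.

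I expect no serious obstacle here; the main point requiring care is simply to keep the two "levels" of cosets straight — cosets of $\h\xy$ inside $\G x$ versus cosets of the subgroup $(\h\xy\scir\h\xz)/\h\xy$ inside $\G x/\h\xy$ — and to invoke Lemmas~\ref{L:lm1} and~\ref{L:lm2} at precisely the place where this translation is needed. Because all the groups are cyclic and everything is described numerically via residues modulo $\cc nx$ and $\cc ny$, one could alternatively give a purely arithmetic proof: $\hvphs\xy$ sends the generator $\h\xy\scir\h\xz+1$ to $\k\xy\scir\h\yz+1$ (this follows from $\vphi\xy$ sending $\h\xy+1$ to $\k\xy+1$ together with \refL{lm1} and \refL{lm2} with $s=1$), and an isomorphism of cyclic groups is determined by its effect on a generator, so it must send $\h\xy\scir\h\xz+s$ to $\k\xy\scir\h\yz+s$ for every $s$ with $0\le s<d$. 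I would probably present the short generator-based argument, with the third-isomorphism-theorem description included as the justification that $\hvphs\xy$ is the map it is.
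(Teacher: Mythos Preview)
Your proposal is correct and follows essentially the same approach as the paper: both invoke \refL{lm1}, the definition of $\vphi\xy$, and \refL{lm2} in that order. The paper's proof is the four-line direct computation
\[
\hvphs\xy(\h\xy\scir\h\xz+s)=\vphi\xy\bigl[\tbigcup_q(\h\xy+qd+s)\bigr]=\tbigcup_q(\k\xy+qd+s)=\k\xy\scir\h\yz+s,
\]
which is exactly your argument stripped of the third-isomorphism-theorem packaging; your representative-based description of $\hvphs\xy$ and the paper's union-of-images description are two standard ways of saying the same thing.
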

\begin{proof}Use the definition of $\hvphs\xy$, \refL{lm1}, the definition of $\vphi\xy$,
and \refL{lm2} to obtain
\begin{align*}
  \hvps\xy(\h\xy\scir \h\xz + s)&=\vphi\xy[\tbigcup\{\h\xy + qd + s:0\le q<\map  xy/d\}]\\
  &=\tbigcup\{\vphi\xy(\h\xy + qd + s):0\le q<\map  xy/d\}\\
  &=\tbigcup\{\k\xy + qd + s:0\le q<\map  xy/d\}\\
  &=\k\xy\scir\h\yz +s\per
\end{align*}
\end{proof}

In a similar fashion, there is a quotient isomorphism $\vphi\yz$
from $\gl yz $ to $\gr yz$ that is defined by
\[\vphi\yz(\h\yz +\ell)=\k\yz +\ell\] for $0\le \ell<\map  yz$.
This isomorphism, in turn, induces an isomorphism $\hvps \yz$
from $\G y/(\k\xy\scir\h\yz)$ to $\G z(\k\xz\scir\k\yz)$ that satisfies the following lemma.

\begin{lm} \labelp{L:lm4}
 $\hvphs\yz(\k\xy\scir \h\yz + s)=\k\xz\scir \k\yz + s$ for   $0\le s< d$\per
\end{lm}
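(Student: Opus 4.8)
The plan is to mirror the proof of \refL{lm3} essentially verbatim, since the setup for $\vphi\yz$ and $\hvphs\yz$ is the exact analogue of the setup for $\vphi\xy$ and $\hvphs\xy$, only with the roles of the index pairs shifted. The key point is that all the ingredients we need are already in place: \refL{lm2} tells us how the coset $\k\xy\scir\h\yz+s$ decomposes as a union of cosets of $\k\xy$, and the definition of $\vphi\yz$ tells us where those cosets of $\h\yz$ (hence, via the relevant decomposition, of $\k\yz$) go. So the proof is a four-line chain of equalities of exactly the same shape as in \refL{lm3}.

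First I would recall, from the paragraph preceding \refL{lm2} together with \refL{lm2} itself, that the composite subgroup $\k\xy\scir\h\yz$ of $\G y$ is generated by $d$ and that its cosets are precisely the sets $\k\xy\scir\h\yz+s$ for $0\le s<d$, each of which equals $\tbigcup\{\k\xy+qd+s:q<\map xy/d\}$. Next I would record the analogous decomposition on the $\G z$ side: the composite subgroup $\k\xz\scir\k\yz$ is generated by $d$, and by the analogue of \refL{lm1}/\refL{lm2} (applied now to the subgroups $\k\xz$ and $\k\yz$ of $\G z$, using that $d=\gcd(\map xz,\map yz)$ by index condition (iv)) its coset $\k\xz\scir\k\yz+s$ equals $\tbigcup\{\k\yz+qd+s:q<\map yz/d\}$. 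Then, using the definition of $\hvphs\yz$ as the isomorphism $\G y/(\k\xy\scir\h\yz)\to\G z/(\k\xz\scir\k\yz)$ induced by $\vphi\yz\colon\gl yz\to\gr yz$, I would write
\begin{align*}
 \hvphs\yz(\k\xy\scir\h\yz+s)&=\vphi\yz[\tbigcup\{\k\xy+qd+s:0\le q<\map xy/d\}]\\
 &=\tbigcup\{\vphi\yz(\h\yz+qd+s):0\le q<\map yz/d\}\\
 &=\tbigcup\{\k\yz+qd+s:0\le q<\map yz/d\}\\
 &=\k\xz\scir\k\yz+s\per
\end{align*}

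The one place that requires a little care — and what I expect to be the main (mild) obstacle — is making sure the index bounds and the ``intermediate'' subgroups line up correctly, since here the relevant quotient map $\vphi\yz$ is defined on $\G y/\h\yz$, not on $\G y/\k\xy$, so one must check that the union appearing on the first line, written in terms of cosets of $\k\xy$ with $q<\map xy/d$, is the same set as the union of cosets of $\h\yz$ with $q<\map yz/d$ on which $\vphi\yz$ naturally acts; this uses that both unions describe the single coset $\k\xy\scir\h\yz+s$ of the composite subgroup, via \refL{lm2} and the analogue of \refL{lm1} for $\h\yz$. Once this identification is made explicit, the computation that $\hvphs\yz$ sends generators to generators (or equivalently the displayed chain above) goes through exactly as in \refL{lm3}, and the congruence-mod-$d$ argument used at the end of the proof of \refL{lm1} again guarantees that the union on the right is genuinely the coset $\k\xz\scir\k\yz+s$ and not a proper subset.
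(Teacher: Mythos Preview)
Your proposal is correct and is exactly the approach the paper intends: the paper omits the proof of this lemma entirely, introducing it with ``In a similar fashion\ldots'' after the proof of \refL{lm3}, so mirroring that argument is precisely what is expected. Your care about switching from the $\k\xy$-coset decomposition (given by \refL{lm2}) to the $\h\yz$-coset decomposition (the analogue of \refL{lm1} for $\h\yz$, needed so that $\vphi\yz$ can be applied) is the only nontrivial point, and you handle it correctly.
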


Finally, there is a quotient isomorphism $\vphi\xz$ from $\gl xz $ to $\gr xz$ that is defined by
\[\vphi\xz(\h\xz +\ell)=\k\xz +\ell\] for $0\le \ell<\map  xz$.  This
isomorphism  induces an isomorphism $\hvps \xz$ from $\G x/(\h\xy\scir\h\xz)$
to $\G z(\k\xz\scir\k\yz)$ that satisfies the following lemma.

\begin{lm} \labelp{L:lm5}
 $\hvphs\xz(\h\xy\scir \h\xz + s)=\k\xz\scir \k\yz + s$ for   $0\le s< d$\per
\end{lm}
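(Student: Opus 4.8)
The statement claims that the isomorphism $\hvps\xz$ induced by $\vphi\xz$ satisfies $\hvphs\xz(\h\xy\scir\h\xz+s)=\k\xz\scir\k\yz+s$ for $0\le s<d$. The plan is to mimic the proof of \refL{lm3} exactly, replacing the pair $\xy$ by $\xz$ and the pair $\yz$ by $\xz$ (i.e., $\vphi\xz$ plays the role that $\vphi\xy$ played there). The first thing I would check is that the source and target groups of $\hvps\xz$ are what the lemma needs: $\vphi\xz$ goes from $\gl xz$ to $\gr xz$, so it induces a map on the further quotients by $\h\xy\scir\h\xz$ (which is the subgroup of $\G x$ generated by $d$, hence contains $\h\xz$) and by $\k\xz\scir\k\yz$ (the subgroup of $\G z$ generated by $d$, containing $\k\xz$). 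Since $d=\gcd(\map xy,\map xz)=\gcd(\map xz,\map yz)$ by index condition (iv), both $\h\xy\scir\h\xz$ and $\k\xz\scir\k\yz$ are the ``multiples of $d$'' subgroups, and each has exactly $d$ cosets. So the induced map $\hvps\xz$ is well-defined between the stated quotients.

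For the computation itself, I would first record the analogue of \refL{lm1} and \refL{lm2} for the relevant subgroups. The subgroup $\h\xz$ of $\G x$ has $\map xz$ cosets of the form $\h\xz+\ell$ for $\ell<\map xz$, and since $d\mid\map xz$, the same division-algorithm argument used in \refL{lm1} gives
\[
  \h\xy\scir\h\xz+s=\tbigcup\{\h\xz+qd+s:q<\map xz/d\}
\]
for $0\le s<d$; symmetrically, $\k\xz\scir\k\yz+s=\tbigcup\{\k\xz+qd+s:q<\map xz/d\}$. (These are literally \refL{lm1} and \refL{lm2} with $\xy\rightsquigarrow\xz$ and $\yz\rightsquigarrow\xz$, so no new work is required beyond noting that $d$ divides $\map xz$ and that $\h\xy\scir\h\xz$ is generated by $d$.) Then the chain of equalities is forced: by the definition of $\hvps\xz$ as the map induced by $\vphi\xz$, together with the displayed decomposition,
\begin{align*}
  \hvps\xz(\h\xy\scir\h\xz+s)
    &=\vphi\xz\bigl[\tbigcup\{\h\xz+qd+s:0\le q<\map xz/d\}\bigr]\\
    &=\tbigcup\{\vphi\xz(\h\xz+qd+s):0\le q<\map xz/d\}\\
    &=\tbigcup\{\k\xz+qd+s:0\le q<\map xz/d\}\\
    &=\k\xz\scir\k\yz+s,
\end{align*}
where the third equality uses the defining formula $\vphi\xz(\h\xz+\ell)=\k\xz+\ell$ and the fourth uses the decomposition of $\k\xz\scir\k\yz+s$.

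There is essentially no obstacle here; the only point requiring a moment's care is verifying that $\vphi\xz$ really does descend to the quotients by the composite subgroups, i.e. that $\h\xy\scir\h\xz$ is the preimage under the quotient map $\G x\to\gl xz$ of a subgroup of $\gl xz$ and that $\vphi\xz$ carries that subgroup onto the image of $\k\xz\scir\k\yz$. This follows because $d\mid\map xz$ (so $\h\xz\subseteq\h\xy\scir\h\xz$), so $\h\xy\scir\h\xz/\h\xz$ is a genuine subgroup of $\gl xz$, generated by the image of $d$; and $\vphi\xz$ sends $\h\xz+1$ to $\k\xz+1$, hence sends $\h\xz+d$ to $\k\xz+d$ and therefore maps the subgroup generated by the image of $d$ onto its counterpart in $\gr xz$, whose preimage in $\G z$ is exactly $\k\xz\scir\k\yz$. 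Once that is in hand, the displayed calculation completes the proof, and the same remark applies verbatim to \refL{lm4}.
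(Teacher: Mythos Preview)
Your proposal is correct and follows exactly the approach the paper intends: the paper states \refL{lm5} without proof, leaving it implicit that one mimics the proof of \refL{lm3} with $\vphi\xz$ in place of $\vphi\xy$, decomposing the relevant cosets into $\h\xz$- and $\k\xz$-cosets and applying the defining formula $\vphi\xz(\h\xz+\ell)=\k\xz+\ell$. Your extra remark verifying that $\hvps\xz$ is well defined between the stated quotients (using index condition~(iv) to identify $d=\gcd(\map xy,\map xz)=\gcd(\map xz,\map yz)$) is a welcome bit of care that the paper leaves tacit.
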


The following definition is from \cite[Definition 1]{ga02}, see also
\cite[Definition 4.1]{giv1}).

\begin{df}\labelp{D:cosfra} A \textit{group} \textit{frame}  is a group pair
\[\mc F=(\langle \G x:x\in
I\,\rangle\smbcomma\langle\vph_\xy:\pair x y\in \mc E\,\rangle)\]
satisfying the following  \textit{frame conditions} for all pairs
$\pair xy$ and $\pair yz$ in $\mc E$\per
\begin{enumerate}
\item[(i)]
$\vphi {xx}$ is the identity automorphism of $\G x/\{\e x\}$ for
all $x$\per
\item [(ii)]
$\vphi \yx=\vphi \xy\mo$\per
\item[(iii)] $\vphi \xy[\h \xy\scir\h\xz] =\k
\xy\scir\h \yz$\per
\item[(iv)] $\vphih \xy\rp\vphih \yz=\vphih \xz$\po
\end{enumerate}\qed\end{df}

It is shown in \cite{giv1} that this  definition gives  necessary
and sufficient conditions  for a group pair $\mc F$ to give rise to
a group relation algebra $\cra G F$.
We recall the definition of $\cra G F$ from \cite{ga02},
\cite{giv1}. Suppose that $(H_{xy,\alpha} : \alpha<\kappa_{xy})$ is
a listing of the cosets of $H_{xy}$ in $G_x$. Define
\[ R_{xy,\alpha} = \underset{\gamma<\kappa_{xy}}{\bigcup}\
H_{xy,\gamma}\times\varphi_{xy}(H_{xy,\gamma}\scir
H_{xy,\alpha})\per\] Let $A$ be the set of all binary relations of
form $\bigcup\{ R_{xy,\alpha} : (x,y,\alpha)\in X\}$, where
$X\subseteq\{ (x,y,\alpha) : (x,y)\in \mc E\mbox{ and
}\alpha<\kappa_{xy}\}$\per When $\mc F$ is a group frame, then $A$
is a set of binary relations that is closed under the Boolean
set-theoretic operations, that contains the identity relation on
$\bigcup\{ G_x : x\in I\}$, and that is closed under the operations
of forming the composition of two binary relations and the converse
of a binary relation. The set relation algebra with universe $A$ is
denoted by  $\cra G F$. It is easy to see that each supremum in
$\cra G F$ is indeed a union, so $\cra G F$ is completely
represented.

\begin{theorem}[GCD Theorem]\labelp{T:gcd}
Let $G=\langle \G x:x\in I\rangle$ be a system of mutually
disjoint\comma finite\comma cyclic groups\comma and $\mce$
an equivalence relation on $I$\per For each system
\[m=\langle\map  xy:\pair xy\in\mc E\rangle\]  of positive
integers satisfying the index conditions\comma there exists a system
of quotient isomorphisms $\vp=\langle\vphi\xy:\pair xy\in\mc
E\rangle$ such that the group pair $\mc F=\pair G\vp$ satisfies the
four frame conditions and is therefore a group frame\per The
corresponding group relation algebra $\cra G F$ therefore exists\per
Moreover\comma
\[\map  xy=|\G x/\h\xy |\comma\] where $\h\xy$ is the kernel of $\vphi\xy$\per
\end{theorem}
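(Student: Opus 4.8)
The plan is to adopt as $\vp=\langle\vphi\xy:\pair xy\in\mce\rangle$ exactly the system of quotient isomorphisms built in the discussion preceding the theorem, and to check directly that $\mc F=\pair G\vp$ satisfies the four frame conditions of \refD{cosfra}. Recall that, after identifying each $\G x$ with $\zn x$, the subgroups $\h\xy$ and $\k\xy$ were taken to be the sets of multiples of $\map xy$ modulo $\cc nx$ and modulo $\cc ny$, respectively (legitimate by index condition~(i)), and $\vphi\xy\colon\G x/\h\xy\to\G y/\k\xy$ was the bijection with $\vphi\xy(\h\xy+\ell)=\k\xy+\ell$ for $0\le\ell<\map xy$, an isomorphism because it sends a generator to a generator. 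Since $\h\xy$ has index $\map xy$ in $\G x$ and is, by this very construction, the kernel of $\vphi\xy$ (the preimage in $\G x$ of the identity coset of $\G y/\k\xy$), the ``moreover'' clause $\map xy=|\G x/\h\xy|$ follows at once once $\mc F$ is known to be a group frame. So everything reduces to the four frame conditions.

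First I would dispose of conditions~(i) and~(ii), which are elementary. By index condition~(ii), $\map xx$ equals the order of $\G x$, so $\h\xx=\k\xx$ is the set of multiples of $|\G x|$, i.e.\ $\h\xx=\{\e x\}$; then $\vphi\xx(\{\e x\}+\ell)=\{\e x\}+\ell$ shows $\vphi\xx$ is the identity automorphism of $\G x/\{\e x\}$, which is~(i). For~(ii), index condition~(iii) gives $\map yx=\map xy$, hence $\h\yx=\k\xy$ and $\k\yx=\h\xy$; the map $\vphi\yx\colon\k\xy+\ell\mapsto\h\xy+\ell$ (for $0\le\ell<\map xy$) is then literally $\vphi\xy\mo$.

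Next I would verify condition~(iii). Put $d=\gcd(\map xy,\map xz)=\gcd(\map xy,\map yz)$, the two values being equal by index condition~(iv) --- this is the one place that condition enters at this stage. By \refL{lm1} (the case $s=0$), the composite group $\h\xy\scir\h\xz$ is the union of those cosets $\h\xy+\ell$ with $0\le\ell<\map xy$ and $\ell\equiv0\mod d$, and by \refL{lm2} (again $s=0$), $\k\xy\scir\h\yz$ is the union of the cosets $\k\xy+\ell$ with $0\le\ell<\map xy$ and $\ell\equiv0\mod d$. Applying $\vphi\xy$, which carries $\h\xy+\ell$ to $\k\xy+\ell$, to this description immediately gives $\vphi\xy[\h\xy\scir\h\xz]=\k\xy\scir\h\yz$, which is~(iii).

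Condition~(iv) is where the real content --- and the name --- of the theorem lie, but the work has already been packaged into Lemmas~\ref{L:lm1}--\ref{L:lm5}. Recall that $\vphi\xy$, $\vphi\yz$, $\vphi\xz$ induce isomorphisms $\hvphs\xy\colon\G x/(\h\xy\scir\h\xz)\to\G y/(\k\xy\scir\h\yz)$, $\hvphs\yz\colon\G y/(\k\xy\scir\h\yz)\to\G z/(\k\xz\scir\k\yz)$, and $\hvphs\xz\colon\G x/(\h\xy\scir\h\xz)\to\G z/(\k\xz\scir\k\yz)$. For each $s$ with $0\le s<d$, Lemmas~\ref{L:lm3}, \ref{L:lm4}, \ref{L:lm5} give $\hvphs\xy(\h\xy\scir\h\xz+s)=\k\xy\scir\h\yz+s$, then $\hvphs\yz(\k\xy\scir\h\yz+s)=\k\xz\scir\k\yz+s$, and finally $\hvphs\xz(\h\xy\scir\h\xz+s)=\k\xz\scir\k\yz+s$; composing the first two shows $(\hvphs\xy\rp\hvphs\yz)(\h\xy\scir\h\xz+s)=\k\xz\scir\k\yz+s=\hvphs\xz(\h\xy\scir\h\xz+s)$. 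Since the cosets $\h\xy\scir\h\xz+s$ with $0\le s<d$ exhaust $\G x/(\h\xy\scir\h\xz)$, we conclude $\hvphs\xy\rp\hvphs\yz=\hvphs\xz$, which is~(iv). Hence $\mc F$ is a group frame, and by the cited result of \cite{giv1} the group relation algebra $\cra G F$ exists and is given by the construction recalled just before the theorem. The hard part has really been isolated beforehand: index condition~(iv) is exactly what makes a single $\gcd$ $d$ govern all three composite subgroups $\h\xy\scir\h\xz$, $\k\xy\scir\h\yz$, $\k\xz\scir\k\yz$ simultaneously, and once Lemmas~\ref{L:lm1}--\ref{L:lm5} are in hand the present argument is a matter of assembly together with the trivial checks of~(i) and~(ii).
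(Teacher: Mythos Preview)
Your proposal is correct and follows essentially the same route as the paper's proof: both adopt the explicit quotient isomorphisms $\vphi\xy(\h\xy+\ell)=\k\xy+\ell$ defined before the theorem and verify the four frame conditions in turn, using index conditions (ii) and (iii) for frame conditions (i) and (ii), and Lemmas~\ref{L:lm1}--\ref{L:lm5} together with index condition (iv) for frame conditions (iii) and (iv). The only cosmetic difference is that for frame condition (iii) you unwind the argument back to Lemmas~\ref{L:lm1} and~\ref{L:lm2} and apply $\vphi\xy$ coset by coset, whereas the paper simply cites the $s=0$ case of \refL{lm3} (whose proof is exactly that computation); the content is identical.
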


\begin{proof} Consider, first, frame condition (i).  Index condition (ii)
implies that $\map  xx$ coincides with the cardinality $\cc nx$ of
the group. The subgroups $\h\xx$ and $\k\xx$  consist  of the
multiples of $\map  xx$ in $\G x$, so they must be the trivial
subgroup $\{0\}$.  The definition of $\vphi\xx$ and the natures of
$\h \xx$  and $\k \xx$ imply that
\[\vphi\xx(\{\ell\})=\vphi\xx(\h\xx +\ell)=\k\xx +\ell=\{\ell\}\comma\]
so $\vphi\xx$ is the identity automorphism of $\G x/\{0\}$.
Thus, frame condition (i) holds.

Turn now to frame condition (ii).  The subgroup $\h\yx$ is defined
to be the set of multiples of $\map  yx$ in $\G y$, and the subgroup
$\k\xy$ is defined to be the set of multiples of $\map  xy$ in $\G
y$.  Index condition (ii) says that $\map  yx=\map  xy$, so
$\h\yx=\k\xy$, and similarly, $\k\yx=\h\xy$.   Furthermore,
\[\vphi\yx(\h\xy+\ell)=\k\yx+\ell,\] by the definition of $\vphi\yx$, while
\[\vphi\xy(\k\yx + \ell)=\vphi\xy(\h\xy + \ell)=\k\xy +\ell =\h\yx +\ell\comma\]
by the preceding remarks and the definition of $\vphi\xy$.  Combine
these observations to conclude that $\vphi\yx=\vphi\xy\mo$, which is
what frame condition (ii) asserts.

To verify frame condition (iii), just take $s=0$ in
Lemmas~\ref{L:lm3} and \ref{L:lm4}. In a similar fashion, frame
condition (iv) follows from Lemmas~\ref{L:lm3}--\ref{L:lm5},
because
\begin{multline*} (\hvphs\xy\rp\hvphs\yz)(\h\xy\scir\h\xz
+s)=\hvphs\yz(\hvps\xy(\h\xy\scir\h\xz
+s))\\=\hvphs\yz(\k\xy\scir\h\yz +s)=\k\xz\scir\k\yz +s\comma
 \end{multline*}
 by the definition of the relational composition of two functions, and
 Lemmas~\ref{L:lm3} and \ref{L:lm4}, while
 \[\hvphs\xz(\h\xy\scir\h\xz +s)=\k\xz\scir\k\yz +s\comma\] by \refL{lm5}.
\end{proof}

It is helpful to visualize index conditions (ii)-(iv) by making a
diagram such as  the one in  \refF{fig1}.  Condition (ii) says that
each square in the diagram that is on the line $y=x$ (the identity
relation) carries the same number as the cardinality of the
corresponding group.  In the example given in \refF{fig1}, each such
square is labeled with the same number $6$, because each group is
assumed to have cardinality $6$, but of course in other examples
different groups may have different cardinalities. Condition (iii)
says that the diagram must be symmetric across the line $y=x$. To
check the validity of condition (iv), it must be shown that any two
of any three given  indices $\map rs$, $\map rt$, $\map st$ have the
same greatest common divisor as any other two of the given indices.
This can be checked one column at a time.  Take two numbers that are
in the  $r$th column, $\map rs$ and $\map rt$, and then use either
row $s$ (if $s$ is to the left of $t$ in the column listing) or row
$t$ (if $t$ is to the left of $s$ in the column listing) to locate
$\map st$ or $\map ts$ respectively (it doesn't matter which one
because the two indices must be equal), by going along the row to
the right until the appropriate column is reached. For a concrete
example, observe that in the $u$th column, $\map uv=3$ and $\map
uy=2$.  Since $y$ is to the left of $v$ in the column listing, go to
the $y$th row, and move right to the $v$th column.  The entry there
is $\map vy=1$.  Any two of the three numbers $3$, $2$, and $1$ have
the same greatest common divisor, namely $1$.

\begin{figure}[htb]  \psfrag*{x}[B][B]{$x$} \psfrag*{y}[B][B]{$y$}
\psfrag*{z}[B][B]{$z$} \psfrag*{u}[B][B]{$u$}
 \psfrag*{v}[B][B]{$v$}  \psfrag*{w}[B][B]{$w$}
 \psfrag*{p}[B][B]{$p$} \psfrag*{d}[B][B]{$1$} \psfrag*{c}[B][B]{$2$} \psfrag*{b}[B][B]{$3$}
\psfrag*{a}[B][B]{6} \begin{center}\includegraphics*[scale=.6]{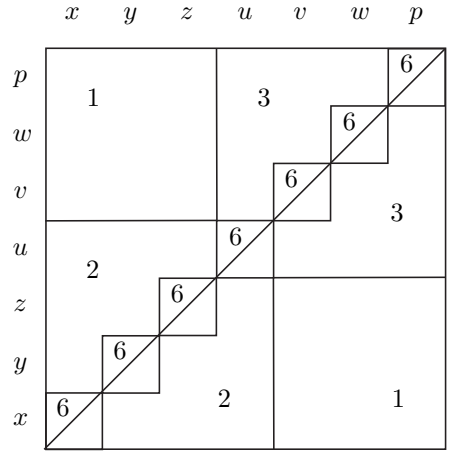}\end{center}
\caption{A graphical example of verifying the index conditions.}\labelp{F:fig1}
\end{figure}

There is a kind of converse to the GCD Theorem that is true.

\begin{theorem}\labelp{T:3.9}Suppose that a group pair $\mc F=\pair G\vp$ consists of finite\comma
cyclic groups\comma with
\[\G{}=\langle \G x:x\in I\rangle\qquad\text{and}\qquad \vp=\langle\vphi\xy:\pair xy\in\mc E\rangle\comma\] and
satisfies the four frame conditions\per  If
\[\map  xy=|\G x/\h\xy|\] for every pair $\pair xy$ in $\mc E$\comma where $\h\xy$ is the
kernel of $\vphi\xy$, then the resulting system
\[m=\langle\map  xy:\pair x y\in\mc E\rangle\] satisfies the index conditions\per
\end{theorem}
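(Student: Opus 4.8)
The plan is to verify the four index conditions in turn, reading conditions (i)--(iii) off frame conditions (i) and (ii) almost immediately, and extracting the greatest-common-divisor condition (iv) from frame condition (iii) by a counting argument. Throughout, the relevant fact about finite cyclic groups is the following: such a group of order $n$ has, for each divisor $k$ of $n$, exactly one subgroup of index $k$; and if $U$ and $V$ are subgroups of indices $u$ and $v$, then $U\scir V$ is again a subgroup, of index $\gcd(u,v)$. Since $\vphi\xy$ is an isomorphism of $\G x/\h\xy$ onto $\G y/\k\xy$, we have $\map xy=|\G x/\h\xy|=|\G y/\k\xy|$ for every pair $\pair xy$ in $\mce$; hence $\h\xy$ is the unique subgroup of $\G x$ of index $\map xy$, and $\k\xy$ is the unique subgroup of $\G y$ of index $\map xy$. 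In particular $\map xy$ divides both $|\G x|$ and $|\G y|$ by Lagrange's theorem, which is index condition (i).

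For index condition (ii), frame condition (i) says that $\vphi\xx$ is the identity automorphism of $\G x/\{\e x\}$, so $\h\xx=\{\e x\}$ and $\map xx=|\G x/\{\e x\}|=|\G x|$, the order of $\G x$. For index condition (iii), frame condition (ii) says $\vphi\yx=\vphi\xy\mo$; the map $\vphi\xy\mo$ goes from $\G y/\k\xy$ onto $\G x/\h\xy$, whereas $\vphi\yx$ is by definition an isomorphism of $\G y/\h\yx$ onto $\G x/\k\yx$, so $\h\yx=\k\xy$ and consequently $\map yx=|\G y/\h\yx|=|\G y/\k\xy|=|\G x/\h\xy|=\map xy$.

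The real content is index condition (iv). Fix pairs $\pair xy$ and $\pair yz$ in $\mce$, so that $\pair xz$ lies in $\mce$ as well. Since $\h\xy\seq\h\xy\scir\h\xz$ and $\k\xy\seq\k\xy\scir\h\yz$, frame condition (iii) says exactly that $\vphi\xy$ restricts to a bijection of the subgroup $(\h\xy\scir\h\xz)/\h\xy$ of $\G x/\h\xy$ onto the subgroup $(\k\xy\scir\h\yz)/\k\xy$ of $\G y/\k\xy$. Equating the cardinalities of these two subgroups, and recalling that $\h\xy$ and $\h\xz$ have indices $\map xy$ and $\map xz$ in $\G x$, while $\k\xy$ and $\h\yz$ have indices $\map xy$ and $\map yz$ in $\G y$, the cyclic-group facts above turn the equality into $\map xy/\gcd(\map xy,\map xz)=\map xy/\gcd(\map xy,\map yz)$, that is,
\[\gcd(\map xy,\map xz)=\gcd(\map xy,\map yz)\per\]
Repeating this argument with $\pair xz$ in place of $\pair xy$ and $\pair zy$ in place of $\pair yz$ (legitimate, since $\mce$ is an equivalence relation), and using $\map zy=\map yz$ from index condition (iii), gives $\gcd(\map xz,\map xy)=\gcd(\map xz,\map yz)$. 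The two displayed equalities together show that $\gcd(\map xy,\map xz)$, $\gcd(\map xy,\map yz)$, and $\gcd(\map xz,\map yz)$ all coincide, which is index condition (iv).

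I expect the one delicate point to be the bookkeeping in the third paragraph: one must keep in mind that $\vphi\xy$ is an isomorphism of \emph{quotient} groups, so it preserves the index rather than the order of a subgroup lying above its kernel, and one must use the fact, established in the second paragraph, that the a priori merely normal subgroup $\k\xy$ of $\G y$ in fact has index exactly $\map xy$ there. Once those two points are in hand the index arithmetic is routine, and the relabelling step takes care of the remaining two equalities at no extra cost. Note that frame condition (iv) plays no role in this proof.
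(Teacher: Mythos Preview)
Your proof is correct and follows essentially the same approach as the paper: both arguments read off index conditions (i)--(iii) directly from frame conditions (i) and (ii), and both obtain index condition (iv) by computing the index of the composite subgroup $\h\xy\scir\h\xz$ in a cyclic group as $\gcd(\map xy,\map xz)$ and then transporting through the isomorphism $\vphi\xy$.  The only cosmetic difference is that the paper phrases the counting in terms of the induced isomorphisms $\hvphs\xy$ and $\hvphs\yz$ between the double quotients $\gldhh x\xy\xz$, $\gldkh y\xy\yz$, $\gldkk z\xz\yz$ to get the chain $d=d'=d''$ directly, whereas you count the sizes of $(\h\xy\scir\h\xz)/\h\xy$ and its image, and then relabel to pick up the third equality; these are equivalent bookkeeping choices, and, as you observe, frame condition (iv) is not needed in either version.
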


\begin{proof}Index condition (i) is satisfied, by the very  definition of $\map  xy$. As regards index condition
(ii), it follows from frame condition (i). In more detail, $\vphi\xx$ is the identity automorphism of
$\G x/\{0\}$, by frame condition (i), so $\h \xx=\{0\}$ and therefore
\[\map  xx=|\G x/\h\xx|=|\G x|\per\]

Frame condition (ii) says that $\vphi\yx=\vphi\xy\mo$.  In particular,
\[\h\yx=\k\xy\qquad\text{and}\qquad\k\yx=\h\xy\comma\]
and therefore
\[\map  yx =|\G y/\h\yx|=|\gr xy|=|\gl xy|=\map   xy\per\]

Turn now to index condition (iv).  Because $\map  xy$ is the index
of $\h\xy$ in $\G x$, the subgroup $\h\xy$ must consist of the
multiples of $\map  xy$ modulo $\cc nx$.  Similarly, $\h\xz$ must
consist of the multiples of $\map  xz$ modulo $\cc nx$.  It follows
from cyclic group theory that the composite group $\h\xy\scir\h\xz$
is generated  by $d=\gcd\pair {\map  xy}{\map  xz}$, and therefore
\begin{equation*}\tag{1}\labelp{Eq:thm6.1}
  d=|\G x/(\h\xy\scir\h\xz)|\per
\end{equation*}
Similar arguments show that  $\k\xy\scir\h\yz$ is generated  by
$d'=\gcd\pair {\map  xy}{\map  yz}$, so that
\begin{equation*}\tag{2}\labelp{Eq:thm6.2}
  d'=|\G y/(\k\xy\scir\h\yz)|\comma
\end{equation*}
and  $\k\xz\scir\k\yz$ is generated  by $d''=\gcd\pair {\map  xz}{\map  yz}$, so that
\begin{equation*}\tag{3}\labelp{Eq:thm6.3}
  d''=|\G z/(\k\xz\scir\k\yz)|\per
\end{equation*}  The induced isomorphism $\hvphs\xy$ maps the quotient
$\G x/(\h\xy\scir\h\xz)$ isomorphically to the quotient
$\G y/(\k\xy\scir\h\yz)$, by frame condition (iii), so \refEq{thm6.1} and
\refEq{thm6.2} imply that $d=d'$.    Similarly, the induced isomorphism $\hvphs\yz$
maps the quotient $\G y/(\k\xy\scir\h\yz)$ isomorphically to  the quotient
$\G z/(\k\xz\scir\k\yz)$, so \refEq{thm6.2} and \refEq{thm6.3} imply that $d'=d''$.
Combine these observations with the definitions of $d$, $d'$, and $d''$ to conclude that index condition (iv) holds.
\end{proof}

\section{Regular elements and indices}\labelp{S:sec4}

\newcommand\ins[1]{\textnormal{index}(#1)}

We now fix a measurable relation algebra $\f A$.  Thus, the identity
element in $\f A$ is the sum of a set $I$ of subidentity atoms, and
each atom $x$ in $I$ is measurable in the sense that the square
$x;1;x$ is a sum of non-zero functions below it.  These functions
are atoms and  form a group  $\G x$ under the operations of relative
multiplication and converse in $\f A$, with $x$ as the group
identity element, by Lemmas 3.2 and 3.3 in \cite{givand3}.  We
assume that each such group is   finite and cyclic.  All elements
are assumed to be in $\f A$.   The left and right stabilizers of an
element $a$ in $\f A$ are the sets
\[\h a=\{f\in \G x:f;a=a\}\qquad\text{and}\qquad \k a=\{g\in \G y: a;g=a\}, \]
and these stabilizers are (normal) subgroups of $\G x$ and $\G y$ respectively.
For measurable atoms $\wx$ and $\wy$, an
 element $a\le \xoy$  is called \textit{regular} if
\[a;a\conv=\suml \L_\al\qquad\text{and}\qquad a\conv;a=\suml \R_\al\per\]
It turns out that regular elements have some of the properties of
atoms. In particular, every atom is regular, by Partition Lemma 4.11
in \cite{givand3}.

\begin{df} \labelp{D:index} For each regular element $a$\comma  define the \emph{index} of $a$ to
be the cardinality of the quotient algebra $\G x/\h a$\comma
in symbols,
\[\ins a=|\G x/\h a|=|\G y/\k a|\per\]
\end{df}

\noindent In other words, the index of $a$ is the number of cosets
that the normal subgroup $\h a$ has in $\G x$, or, equivalently,
that the normal subgroup $\k a$ has in $\G y$.
Moreover, for every coset $H$ of $\h a$ there is a uniquely
determined coset $K$ of $\k a$ such that \[ H;a=a;K\comma\] and
conversely, so that the function $\vphi a$ from $\G x/H_{a}$ to $\G
y/K_{a}$ defined by \[\vphi a(H)=K\qquad\text{if and only if}\qquad
H;a=a;K\] is a bijection, and actually a quotient isomorphism.

\begin{lm}\labelp{L:index1} If $\G y$ is a cyclic group\comma and $a\le x;1;y$
and $b\le y;1;z$ are regular elements\comma then $a;b\le \xoz$ is a regular element, and
\[\ins{a;b}=\gcd\pair{\ins a}{\ins b}\per\]
\end{lm}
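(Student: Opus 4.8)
The plan is to prove directly that $a;b$ is regular and then to compute its stabilizer. Throughout, write $i_a=\ins a=|\G x/\h a|=|\G y/\k a|$ and $i_b=\ins b=|\G y/\h b|=|\G z/\k b|$, and $d=\gcd(i_a,i_b)$; since $\G y$ is finite cyclic, $\k a$ and $\h b$ are precisely its subgroups of indices $i_a$ and $i_b$, namely the multiples of $i_a$ and of $i_b$. I first record three easy facts (here $\tsum S$ denotes the Boolean join of the atoms belonging to a subset $S$ of a group $\G{x}$). (a)~$a;b\le x;1;z$ is non-zero, because $a\conv;(a;b)=(a\conv;a);b=(\tsum\k a);b\ge y;b=b\ne 0$, using $y\in\k a$ and \refL{square}(iii). (b)~For every non-zero $c\le x;1;y$ one has $x\le c;c\conv$ and $y\le c\conv;c$: the domain and range of such a $c$ are non-zero subidentity elements below $x$ and $y$, hence equal to the atoms $x$ and $y$ (standard, following from the cycle law). (c)~For $f\in\G x$ and for every $g$ in the coset $\vphi a(\h af)$, one has $f;a=a;g$; this follows by expanding the defining equation $\h af;a=a;\vphi a(\h af)$ with the distributive law and using that elements of one coset of $\h a$ (resp.\ of $\k a$) act the same way on $a$ from the left (resp.\ right), so in particular $a;g=a;g'$ whenever $\k ag=\k ag'$. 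The one substantive preliminary is a \emph{disjointness lemma}: for $g,g'\in\G y$, if $a;g\cdot a;g'\ne 0$ then $\k ag=\k ag'$; dually, if $g;b\cdot g';b\ne 0$ then $\h bg=\h bg'$. To prove the first assertion, right-multiply by $g'\conv$ (a functional atom, so this preserves non-zeroness of subrectangle elements) to reduce to the claim that $a;h\cdot a\ne 0$ implies $h\in\k a$; and if $0\ne e\le a\cdot(a;h)$, then monotony and fact (b) give
\[ y\le e\conv;e\le a\conv;e\le a\conv;(a;h)=(\tsum\k a);h=\tsum(\k ah),\]
and since $\k ah$ is a finite set of atoms this forces $y\in\k ah$, i.e.\ $h\in\k a$. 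Hence the $i_a$ right translates $a;g$ (one per coset of $\k a$) are pairwise disjoint and non-zero, and likewise the $i_b$ left translates $g;b$.

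Now for regularity. The inclusion $\tsum\h{a;b}\le(a;b);(a;b)\conv$ is immediate from $x\le(a;b);(a;b)\conv$ (facts (a),(b)): if $f;(a;b)=a;b$ then $f=f;x\le f;(a;b);(a;b)\conv=(a;b);(a;b)\conv$. For the reverse inclusion, expand $(a;b);(a;b)\conv=a;(b;b\conv);a\conv=\tsum_{h\in\h b}a;h;a\conv$ (using that $b$ is regular), fix $h\in\h b$ and a functional atom $f\in\G x$ with $f\le a;h;a\conv$, and estimate
\[ f;a\le a;h;a\conv;a=a;h;(\tsum\k a)=a;\tsum(\k ah)=a;h,\]
where the last two equalities use that $\G y$ is abelian and that $a;(hk)=a;h$ for all $k\in\k a$. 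Writing $f;a=a;g$ with $g\in\vphi a(\h af)$ (fact (c)), the relation $0\ne a;g=a;g\cdot a;h$ and the disjointness lemma give $\k ag=\k ah$, whence $f;a=a;g=a;h$ and therefore $f;(a;b)=(f;a);b=(a;h);b=a;(h;b)=a;b$, i.e.\ $f\in\h{a;b}$. Since $a;h;a\conv\le x;1;x$ is a finite sum of atoms of $\G x$, we get $a;h;a\conv\le\tsum\h{a;b}$, and summing over $h\in\h b$ gives $(a;b);(a;b)\conv\le\tsum\h{a;b}$. Thus $(a;b);(a;b)\conv=\tsum\h{a;b}$; re-running the same argument with the regular elements $b\conv\le z;1;y$ and $a\conv\le y;1;x$ in place of $a$ and $b$, and using $\h{c\conv}=\k c$ for every $c$, we also obtain $(a;b)\conv;(a;b)=\tsum\k{a;b}$. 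Hence $a;b$ is regular.

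It remains to compute the index, and carrying the previous computation one step further pins down $\h{a;b}$. For $f\in\G x$ write $f;a=a;g_f$ with $g_f$ chosen in the coset $\vphi a(\h af)=\k ag_f$. Then $f\in\h{a;b}$ iff $a;(g_f;b)=a;b$, iff $(\tsum\k a);(g_f;b)=(\tsum\k a);b$ (multiply by $a\conv$, resp.\ by $a$, using $a;(\tsum\k a)=a$); expanding both sides into left translates of $b$ and invoking the disjointness lemma, this holds iff the sets of $\h b$-cosets $\{\h bg:g\in\k ag_f\}$ and $\{\h bg:g\in\k a\}$ coincide, i.e.\ iff $g_f\in\k a\h b$. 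Thus $\h{a;b}$ is the inverse image of the subgroup $(\k a\h b)/\k a$ of $\G y/\k a$ under the surjection $\G x\to\G y/\k a$ obtained by composing the quotient map $\G x\to\G x/\h a$ with $\vphi a$; its kernel is $\h a$, so $|\G x/\h{a;b}|=|\G y/(\k a\h b)|$. Finally, since $\G y$ is cyclic, $\k a$ and $\h b$ are the subgroups of multiples of $i_a$ and of $i_b$, so $\k a\h b$ is the subgroup of multiples of $\gcd(i_a,i_b)=d$, which has index $d$. Therefore $\ins{a;b}=|\G x/\h{a;b}|=d=\gcd(\ins a,\ins b)$.

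The step I expect to be the main obstacle is the disjointness lemma: it is the only place where the atomic, measurable structure of $\f A$ — rather than the pure relation-algebra identities — is genuinely used, and it is exactly what makes the two crucial passages work, namely "$f;a\le a;h$ implies $f;a=a;h$" in the regularity argument, and the coset bookkeeping that identifies $\h{a;b}$ in the index argument. The hypothesis that $\G y$ is cyclic, by contrast, enters only at the very end, exactly as in \refT{gcd}, to guarantee that $|\G y/(\k a\h b)|=\gcd(i_a,i_b)$.
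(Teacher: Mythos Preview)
Your proof is correct, but it follows a substantially different route from the paper's.  The paper's argument is essentially a two-line citation: it invokes Relative Product Theorem~5.14 from \cite{givand3} to conclude both that $a;b$ is regular and that $\vphi a$ induces an isomorphism $\hvphs a$ from $\G x/\h{a;b}$ onto $\G y/(\k a;\h b)$, and then computes $|\G y/(\k a;\h b)|=\gcd(\ins a,\ins b)$ by elementary cyclic group theory, exactly as you do in your final paragraph.

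You instead reprove the content of that Relative Product Theorem from first principles.  Your disjointness lemma is essentially Lemma~4.6(iii) of \cite{givand3}, which the paper already quotes as background at the start of \refS{sec4} (``if \dots they belong to different cosets \dots then $(f;a)\cdot(g;a)=0$'').  Your direct verification that $(a;b);(a;b)\conv=\tsum\h{a;b}$, together with the explicit identification of $\h{a;b}$ as the $\vphi a$-preimage of $(\k a\h b)/\k a$, amounts to a hands-on proof of Theorem~5.14 in this special (finite, abelian) setting.  The upshot is the same formula $|\G x/\h{a;b}|=|\G y/(\k a\h b)|$, after which both arguments converge.

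What each approach buys: the paper's proof is short because the heavy lifting has been outsourced to the companion paper; yours is self-contained and makes transparent exactly where atomicity, abelianness, and finiteness enter (respectively in your disjointness lemma, in the step $a;h;(\tsum\k a)=a;h$, and in writing $a;h;a\conv$ as a finite sum of $\G x$-atoms).  One small remark: you use abelianness of $\G y$ well before the final gcd computation, so your closing comment that the cyclic hypothesis ``enters only at the very end'' should be read as saying that \emph{cyclicity beyond abelianness} is used only there.
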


\begin{proof} For notational  convenience, assume that $\G y$ is the cyclic
group $\mathbb{Z}_n$ under the operation of addition modulo $n$, and write
\begin{equation*}\tag{1}\labelp{Eq:index1.1}
  k=\ins a=|\G y/\k a|\qquad\text{and}\qquad \ell=\ins b=|\G y/\h b|\per
\end{equation*}
Thus, $\k a$ and $\h b$ consist  of the multiples of $k $ and $\ell$
modulo $n$ respectively. The complex product \[\k a;\h b=\{f;g:f\in
\k a\text{ and } g\in \h b\}\] consists of the multiples of
$\gcd\pair k\ell$ modulo $n$, by cyclic group theory.  Consequently,
\begin{equation*}\tag{2}\labelp{Eq:index1.2}
 |\G y/(\k a;\h b)|=\gcd\pair k\ell=\gcd\pair{\ins a}{\ins b}\comma
\end{equation*} by cyclic group theory and \refEq{index1.1}. According to
Relative Product Theorem 5.14 in \cite{givand3}, $a;b$ is a regular element,
and the isomorphism  $\vphi a$ from $\G x/\h a$ to $\G y/\k a$ induces an
isomorphism $\hvphs a$ from $\G x/\h{a;b}$ to $\G y/(\k a;\h b)$\per  In particular,
\begin{equation*}\tag{3}\labelp{Eq:index1.3}
 |\G y/\h{a;b}|=|\G x/(\k a;\h b)|\per
\end{equation*} By definition, \begin{equation*}\tag{4}\labelp{Eq:index1.4}
   \ins {a;b}=|\G x/\h{a;b}|\per
\end{equation*}
 Combine \refEq{index1.2}--\refEq{index1.4} to arrive at the desired equation.
\end{proof}

\begin{lm}\labelp{L:index2}  Let $x$ and $y$ be measurable atoms\comma
and $a,b\le x;1;y$ regular elements with $a\le b$\per If $\ins a=\ins b$\comma
then $a=b$\per
\end{lm}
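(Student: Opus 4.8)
The plan is to show that $b$ cannot be strictly larger than $a$ by a counting/partition argument at the level of cosets. Recall that for a regular element $a\le x;1;y$, the stabilizer $\h a$ is a subgroup of $\G x$, and the element $a$ interacts with the cosets of $\h a$ in a tightly controlled way: for each coset $H$ of $\h a$ there is a coset $K$ of $\k a$ with $H;a=a;K$, and the map $\vphi a(H)=K$ is a bijection. The key structural fact I would first extract (from the Partition Lemma and the surrounding material in \cite{givand3}, or re-derive from \refL{square}) is that the nonzero products $H;a$, as $H$ ranges over the $\ins a$ cosets of $\h a$ in $\G x$, form a partition of $a$ into $\ins a$ pairwise disjoint nonzero pieces; equivalently, $a$ splits as a sum of exactly $\ins a$ ``translates,'' and these are the minimal regular pieces into which $a$ decomposes under left multiplication by $\G x$.

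Granting that, here is the argument. Since $a\le b$ and both lie below $\xoy$, consider the element $c=b\cdot(-a)$, so $b=a+c$ with $a\cdot c=0$. I want to show $c=0$. First I would check that $\h b\seq\h a$: if $f\in\G x$ satisfies $f;b=b$, then since $f$ acts as a permutation on the atoms below $\xoy$ fixing $b$ and respecting $\le$, and since $a$ is the ``$\h a$-saturation'' piece inside $b$... — more cleanly: apply \refL{square}(ii) to see $f;a\le f;b=b$, and $f;a$ is again a nonzero regular element below $\xoy$ of the same index $\ins a$ as $a$; now if $\h b\not\seq\h a$ then $b$ would contain at least $\ins b/|\h a\cap\h b|\cdot(\text{something})$... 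This is the step I expect to be the crux, so let me state the clean version: because $\h b$ and $\h a$ are both subgroups of the cyclic group $\G x$, they are comparable, so either $\h b\seq\h a$ or $\h a\seq\h b$. In the second case $\ins b=|\G x/\h b|\ge|\G x/\h a|=\ins a$ with equality iff $\h a=\h b$; combined with the hypothesis $\ins a=\ins b$ this forces $\h a=\h b$ in either case.

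So $\h a=\h b=:\L$. Now decompose $b$ under the action of the cosets of $\L$ in $\G x$: by the structural fact above, $b=\sum_{H}H;b_0$ for a suitable minimal piece, giving $b$ as a sum of exactly $\ins b=\ins a$ pairwise disjoint nonzero regular ``$\L$-blocks,'' and likewise $a$ is a sum of exactly $\ins a$ such blocks. Since $a\le b$ and $\L=\h a=\h b$, the blocks of $a$ are among the blocks of $b$ (each block of $a$, being $\le b$ and minimal, must coincide with the unique block of $b$ meeting it); as there are equally many ($\ins a=\ins b$) of each and they are disjoint, every block of $b$ is a block of $a$, whence $b=a$. The main obstacle is making the ``block decomposition'' and the comparison ``$\h b\seq\h a$ or $\h a=\h b$'' rigorous from the cited results; once the cosets of $\h a$ are known to cut $a$ into exactly $\ins a$ disjoint nonzero pieces and the same for $b$, the equality $a=b$ is forced by the matching cardinalities.
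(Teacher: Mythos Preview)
Your argument has two genuine gaps.

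First, the claim that any two subgroups of a finite cyclic group are comparable under inclusion is false: in $\mathbb Z_6$ the subgroups $\{0,2,4\}$ and $\{0,3\}$ are incomparable. So you cannot reach $\h a=\h b$ by invoking comparability. The inclusion you actually need is $\h a\seq\h b$ (not $\h b\seq\h a$), and it follows directly from $a\le b$ without any cyclicity hypothesis: if $f\in\h a$ then $f;a=a\le b$, while also $f;a\le f;b$; if $f\notin\h b$ then $f;b$ and $b$ are disjoint translates of the regular element $b$, forcing $a\le b\cdot(f;b)=0$, a contradiction. This is exactly the content of Lemma~4.14 in \cite{givand3}, which the paper cites. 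With $\h a\seq\h b$ and $|\G x/\h a|=|\G x/\h b|$ you get $\h a=\h b$ immediately.

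Second, your ``structural fact'' is misstated: the translates $H;a$, as $H$ ranges over the cosets of $\h a$, partition $\xoy$, not $a$; the element $a$ is a single block of this partition, not a sum of $\ins a$ blocks. Once $\h a=\h b$, the correct partition argument runs as follows: for each coset $H$ of $\h a=\h b$ one has $H;a\le H;b$, the families $\langle H;a\rangle_H$ and $\langle H;b\rangle_H$ are each disjoint and sum to $\xoy$, hence $\tsum_H(H;b-H;a)=0$ and so $H;a=H;b$ for every $H$; in particular $a=b$. The paper bypasses this computation by simply invoking the reverse implication of Lemma~4.14 in \cite{givand3}, which packages exactly this step.
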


\begin{proof} The assumption $a\le b$ implies that
\begin{equation}\tag{1}\labelp{Eq:index2.1}
\h a\seq\h b\comma
\end{equation}
  by Lemma 4.14 in \cite{givand3}. The assumption on the indices implies that
  $\h a$ and $\h b$ have the same number of  cosets in $\G x$. These
  cosets partition $\G x$, so it follows from \refEq{index2.1} that the
  inclusion symbol in \refEq{index2.1} may be replaced by equality. Apply
  (the reverse implication of) Lemma 4.14 from \cite{givand3}   to conclude that $a=b$.
\end{proof}

The next lemma is a known result from cyclic group theory.
\begin{lm}\labelp{L:index3} If $H$ and $K$ are subgroups of $\mathbb Z_n$ with
relatively prime indices $h$ and $k$\comma then $\mathbb Z_n=H\scir K$\comma  and
the cosets of $H\cap K$ in $\mathbb Z_n$ are the sets $(H+i)\cap(K+j)$ for $0\le i<h$
and $0\le j<k$.  Each coset has  $n/(h\cdot k)$ elements\per
\end{lm}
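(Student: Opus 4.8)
The plan is to reduce everything to the explicit description of subgroups of a finite cyclic group. First I would recall the standard fact that the subgroup of $\mathbb Z_n$ of index $h$ is necessarily the set $H=\{0,h,2h,\dots\}$ of multiples of $h$ (so that $h\mid n$ and $|H|=n/h$), and likewise that $K$ consists of the multiples of $k$, with $k\mid n$ and $|K|=n/k$. Once $H$ is so described, its cosets in $\mathbb Z_n$ are exactly the residue classes $H+i$ for $0\le i<h$, and similarly the cosets of $K$ are the $K+j$ for $0\le j<k$; so every assertion of the lemma becomes a concrete statement about these residue classes.

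The first assertion, $\mathbb Z_n=H\scir K$, is then immediate from B\'ezout's identity: since $\gcd(h,k)=1$ there are integers $u,v$ with $uh+vk=1$, and here $uh\in H$ while $vk\in K$, so $1\in H\scir K$ and therefore $H\scir K=\mathbb Z_n$. Equivalently, the composite subgroup generated by the multiples of $h$ and the multiples of $k$ is generated by $\gcd(h,k)=1$.

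For the remaining claims I would work with the group homomorphism $\psi\colon\mathbb Z_n\to(\mathbb Z_n/H)\times(\mathbb Z_n/K)$ given by $\psi(c)=(H+c,\,K+c)$. Its kernel is exactly $H\cap K$, so its nonempty fibers are precisely the cosets of $H\cap K$; and the fiber of $\psi$ over the pair $(H+i,\,K+j)$ is by definition the set $(H+i)\cap(K+j)$. Hence it is enough to prove that $\psi$ is surjective, and this follows from the first assertion: $\psi(K)=\{(H+c,\,K)\colon c\in K\}$ already surjects onto the first factor $\mathbb Z_n/H$ because $K\scir H=\mathbb Z_n$, and symmetrically $\psi(H)$ surjects onto the second factor, so $\psi(H)+\psi(K)$ — and hence the image of $\psi$ — is all of $(\mathbb Z_n/H)\times(\mathbb Z_n/K)$. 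Surjectivity now forces $|\mathbb Z_n/(H\cap K)|=|\mathbb Z_n/H|\cdot|\mathbb Z_n/K|=hk$, so $H\cap K$ has exactly $hk$ cosets, each of cardinality $n/(hk)$, and these $hk$ cosets are precisely the sets $(H+i)\cap(K+j)$ with $0\le i<h$ and $0\le j<k$ (which are therefore pairwise distinct and all nonempty) — exactly the remaining content of the lemma.

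I do not anticipate any genuine obstacle: the only point needing a word of argument is the surjectivity of $\psi$, and that reduces in one line to the already-proved equality $\mathbb Z_n=H\scir K$, so the lemma is a short consequence of the subgroup structure of cyclic groups together with B\'ezout's identity. If one prefers to avoid homomorphism language, one can argue directly instead: $(H+i)\cap(K+j)$ is always either empty or a coset of $H\cap K$; it is nonempty because $H\scir K=\mathbb Z_n$; the intersection $H\cap K$ is generated by the least common multiple of $h$ and $k$, which is $hk$, so $|H\cap K|=n/(hk)$; and then counting the $hk$ nonempty, pairwise distinct sets $(H+i)\cap(K+j)$ against the $hk$ cosets of $H\cap K$ finishes the proof.
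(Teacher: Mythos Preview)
Your argument is correct. The paper does not actually prove this lemma: it is introduced with the sentence ``The next lemma is a known result from cyclic group theory'' and no proof is given. Your approach via the homomorphism $\psi\colon\mathbb Z_n\to(\mathbb Z_n/H)\times(\mathbb Z_n/K)$ is essentially the Chinese Remainder Theorem in this setting, and the surjectivity step (reducing to $H\scir K=\mathbb Z_n$, which in turn follows from B\'ezout) is sound. The alternative direct argument you sketch at the end---identifying $H\cap K$ as the subgroup generated by $\mathrm{lcm}(h,k)=hk$, hence of index $hk$, and then counting the $hk$ nonempty pairwise distinct intersections $(H+i)\cap(K+j)$---is equally valid and arguably closer in spirit to the explicit coset computations the paper favors elsewhere (cf.\ Lemmas~\ref{L:lm1}--\ref{L:lm2}).
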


\begin{lm}\labelp{L:index4}  Let $x$ and $y$ be measurable atoms\comma and assume
that $a,b\le x;1;y$ are regular elements with $a\cdot b\neq 0$\per If $\ins a$
and $\ins b$ are relatively prime\comma then \[\ins {a\cdot b}=\ins a\cdot \ins b\per\]
\end{lm}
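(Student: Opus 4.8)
I would prove Lemma~\ref{L:index4} by relating the stabilizer subgroups of $a\cdot b$ to those of $a$ and $b$, then applying the cyclic-group fact recorded in \refL{index3}. The key observation is that $a\cdot b$ is again a regular element below $x;1;y$ (this uses the regular-element theory from \cite{givand3}, which is cited freely in this section), and its left stabilizer $\h{a\cdot b}$ should coincide with $\h a\cap\h b$. Indeed, if $f\in\G x$ fixes both $a$ and $b$ under relative multiplication on the left, then it fixes $a\cdot b$; conversely, Lemma~4.14 from \cite{givand3} (already invoked in the proof of \refL{index2}) gives $a\cdot b\le a$ and $a\cdot b\le b$, hence $\h{a\cdot b}\seq\h a$ and $\h{a\cdot b}\seq\h b$, so $\h{a\cdot b}\seq\h a\cap\h b$. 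For the reverse inclusion one argues that anything fixing both $a$ and $b$ fixes their Boolean product; the subtlety is that "$f;(a\cdot b)=f;a\cdot f;b$" requires relative multiplication by the functional element $f$ to distribute over Boolean meet on the relevant elements, which is a standard fact about relative multiplication by functions in a relation algebra (and is part of the cited regular-element machinery).

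First I would establish that $a\cdot b$ is regular and that $\h{a\cdot b}=\h a\cap\h b$, together with the symmetric statement $\k{a\cdot b}=\k a\cap\k b$. Next, writing $h=\ins a$ and $k=\ins b$, I would note that $\h a$ and $\h b$ are subgroups of the cyclic group $\G x\cong\mathbb Z_n$ with indices $h$ and $k$ respectively (by \dfref{D:index}, with the left stabilizer having index $\ins a$ in $\G x$), and by hypothesis $\gcd(h,k)=1$. Then \refL{index3} applies verbatim: the index of $\h a\cap\h b$ in $\G x$ is exactly $h\cdot k$. Finally, by \dfref{D:index},
\[
\ins{a\cdot b}=|\G x/\h{a\cdot b}|=|\G x/(\h a\cap\h b)|=h\cdot k=\ins a\cdot\ins b,
\]
which is the desired equation.

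**The main obstacle.** The one nonroutine point is verifying $\h a\cap\h b\seq\h{a\cdot b}$, i.e. that an element $f$ of $\G x$ fixing both $a$ and $b$ on the left also fixes $a\cdot b$. This reduces to showing $f;(a\cdot b)=(f;a)\cdot(f;b)$ for a functional atom $f$; since $f$ is an atom of a group $\G x$ with $f\conv;f\le\ident$ (indeed $f\conv;f=x$), relative multiplication by $f$ is an additive, order-preserving, and in fact meet-preserving operation on elements below $x;1;y$ — this is exactly the kind of "functions behave like permutations" fact that underlies the entire regular-element development of \cite{givand3}. I expect this to be citable from that paper (it is implicit in the Partition Lemma and Relative Product Theorem already used above), so the proof should be short: establish the stabilizer-intersection identity, invoke \refL{index3}, and compute the index. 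A secondary (minor) point is confirming that $a\cdot b\neq 0$ is genuinely needed — it guarantees $a\cdot b$ lies strictly inside $x;1;y$ and is a bona fide regular element rather than $0$, so that \dfref{D:index} applies to it.
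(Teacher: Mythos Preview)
Your proposal is correct and follows essentially the same route as the paper: obtain the stabilizer identity $\h{a\cdot b}=\h a\cap\h b$, invoke \refL{index3} to compute the index of this intersection in the cyclic group $\G x$, and read off $\ins{a\cdot b}=\ins a\cdot\ins b$ from \dfref{D:index}. The only difference is packaging: the paper does not re-derive the stabilizer identity (or the regularity of $a\cdot b$) from scratch but simply cites Product Theorem~4.13 of \cite{givand3}, which asserts exactly that for regular $a,b\le x;1;y$ with $a\cdot b\neq 0$ the product $a\cdot b$ is regular and $\h{a\cdot b}=\h a\cap\h b$; your two-inclusion argument is in effect a sketch of that theorem's proof.
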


\begin{proof} Product Theorem 4.13 from \cite{givand3} and the hypothesis that $a\cdot b\neq 0$ imply that
\begin{equation}\tag{1}\labelp{Eq:index3.1}
\h{a\cdot b}=\h a\cap\h b\per
\end{equation}
The cosets of $\h a$ and $\h b$ have the form $\h a + i$ and $\h b + j$ for $0\le i<\ins a$
and $0\le j < \ins b$ respectively.  Apply \refL{index3} (with $\h a$ and $\h b$ in place
of $H$ and $K$ respectively), and use the assumption that the indices of $a$ and $b$ are relatively prime,
to see that
\[\langle(\h a + i)\cap(\h b + j):i<\ins a\text{ and }j < \ins b\rangle\]is a coset
system for \refEq{index3.1} in $\G x$, consisting of $\ins a\cdot\ins b$ cosets, and
each coset has $\ell $ elements, where
\[|\G x|=\cc nx =\ins a\cdot\ins b\cdot\ell\per\]  Consequently, the index of \refEq{index3.1} in $\G x$ is
\begin{equation}\tag{2}\labelp{Eq:index3.2}
|\G x/(\h a\cap\h b)|=\ins a\cdot\ins b\comma
\end{equation} by the definition of the index.  Apply \refEq{index3.1}, \refEq{index3.2},
and the definition of the index of an element to conclude that
\[\ins{a\cdot b}=|\G x/ \h{a\cdot b}|=|\G x/(\h a\cap\h b)|=\ins a\cdot\ins b\per\]
\end{proof}

\begin{cor}\labelp{C:index5}  Let $x$ and $y$ be measurable atoms\per
If $\cc ai\le x;1;y$ for $i=1,\dots,n$ are regular elements with pairwise relatively
prime indices\comma and if $\tprod\limits_{i=1}^n \cc ai\neq 0$, then
\[\ins {\tprod\limits_{i=1}^n \cc ai}=\prod\limits_{i=1}^n \ins {\cc ai}\per\]
\end{cor}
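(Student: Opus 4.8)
The plan is to prove Corollary~\ref{C:index5} by a straightforward induction on $n$, using Lemma~\ref{L:index4} as the inductive step. The base case $n=1$ is trivial, and the case $n=2$ is precisely Lemma~\ref{L:index4}. For the inductive step, suppose the result holds for some $n\ge 2$, and let $\cc a1,\dots,\cc a{n+1}\le x;1;y$ be regular elements with pairwise relatively prime indices whose product $\tprod_{i=1}^{n+1}\cc ai$ is non-zero. Write $b=\tprod_{i=1}^n \cc ai$ and $c=\cc a{n+1}$, so that $bc=\tprod_{i=1}^{n+1}\cc ai\neq 0$; in particular $b\neq 0$ and $c\neq 0$.

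First I would check that $b$ is a regular element below $x;1;y$ and that $b\cdot c\neq 0$, which is immediate since $b\cdot c=\tprod_{i=1}^{n+1}\cc ai\neq 0$ and the product of finitely many regular elements below $x;1;y$ is again a regular element below $x;1;y$ (regular elements below a fixed rectangle are closed under Boolean meet, by Product Theorem 4.13 of \cite{givand3}, as already invoked in the proof of \refL{index4}). By the induction hypothesis, $\ins b=\prod_{i=1}^n \ins{\cc ai}$. Next I must verify that $\ins b$ and $\ins c$ are relatively prime: since each $\ins{\cc ai}$ for $i\le n$ is relatively prime to $\ins{\cc a{n+1}}=\ins c$, and a product of integers each coprime to a fixed integer $k$ is itself coprime to $k$, it follows that $\gcd(\ins b,\ins c)=1$. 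Then \refL{index4}, applied to $b$ and $c$, gives
\[\ins{bc}=\ins b\cdot\ins c=\Bigl(\prod_{i=1}^n \ins{\cc ai}\Bigr)\cdot\ins{\cc a{n+1}}=\prod_{i=1}^{n+1}\ins{\cc ai}\comma\]
which completes the induction.

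There is essentially no obstacle here: the only points requiring a moment's care are the two elementary number-theoretic facts that a product of integers pairwise coprime to $k$ is coprime to $k$, and the bookkeeping that the pairwise-coprimality hypothesis for $\{\cc a1,\dots,\cc a{n+1}\}$ restricts to the same hypothesis for the sub-collection $\{\cc a1,\dots,\cc an\}$ (needed to apply the induction hypothesis) while also yielding $\gcd(\ins b,\ins c)=1$. Both are routine. The one structural fact borrowed from \cite{givand3}, namely that a meet of regular elements below $x;1;y$ is regular and lies below $x;1;y$, is exactly what \refL{index4} itself already presupposes in order to speak of $\ins{a\cdot b}$, so no new machinery is introduced.
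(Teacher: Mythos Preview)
Your proof is correct and is exactly the inductive argument on $n$ that the paper outlines but leaves to the reader. The only slight imprecision is the phrase ``regular elements below a fixed rectangle are closed under Boolean meet''---one needs the meet to be non-zero for Product Theorem~4.13 of \cite{givand3} to apply---but you do check this (each partial product dominates the full non-zero product), so the argument is sound.
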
The proof is by induction on $n$.  The details are left to the reader.
Observe that the symbol $\tprod$ is being used in two different ways
in the preceding corollary. In its first occurrence, it denotes the
Boolean operation of    multiplication on finite sequences of
elements in a Boolean algebra.  In its second occurrence, it denotes
the arithmetic operation of   multiplication on finite sequences of
natural numbers.  This double usage of the symbol is very common and
should not cause readers any confusion. A similar remark applies to
the use of the symbol $\,\cdot\,$ in \refL{index4}.

\begin{lm}\labelp{L:index6}  Let $x$ and $y$ be measurable atoms\comma and $a,b\le x;1;y$ regular
elements\per If the indices of $a$ and $b$ are relatively prime, then $a\cdot b$ is a regular
element below $x;1;y$\per
\end{lm}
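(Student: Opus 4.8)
Write $c=a\cdot b$. The plan is to isolate the one genuinely non‑routine point, namely that $c\neq 0$, derive it from \refL{index1} together with the relative primeness of the indices, and then obtain the two defining equations of regularity for $c$ by standard relation‑algebraic arithmetic.

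I would first settle $c\neq 0$. Since $b\le x;1;y$ is regular, so is its converse $b\conv\le y;1;x$, with stabilizers $\h{b\conv}=\k b$ and $\k{b\conv}=\h b$, so $\ins{b\conv}=\ins b$. As $\G y$ is cyclic, \refL{index1} applies to the regular elements $a\le x;1;y$ and $b\conv\le y;1;x$: the product $a;b\conv\le x;1;x$ is a regular element (in particular nonzero) with $\ins{a;b\conv}=\gcd(\ins a,\ins{b\conv})=\gcd(\ins a,\ins b)=1$. Now a nonzero regular element of index $1$ sitting below a square $x;1;x$ must equal that square: index $1$ forces its left stabilizer to be all of $\G x$, so $(a;b\conv);(a;b\conv)\conv=\suml_{f\in\G x}f=x;1;x$; and since every regular element $r$ satisfies $r;r\conv;r=r$ (immediate from $r;r\conv=\suml_{f\in\h r}f$ and $f;r=r$ for $f\in\h r$), while $(x;1;x);r=x;1;x$ for every nonzero $r\le x;1;x$ (routine: such $r$ has range $x$), we get $a;b\conv=x;1;x$. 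Hence $\e x\le a;b\conv$, and since $\e x=x$ is an atom this says $\e x\cdot(a;b\conv)\neq 0$, which by the Peircean (cycle) law equals $(a\conv;\e x)\cdot b\conv\neq 0$, i.e.\ $a\conv\cdot b\conv\neq 0$ (as $a\conv;\e x=a\conv$ by \refL{square}(iii)), i.e.\ $(a\cdot b)\conv\neq 0$, i.e.\ $a\cdot b\neq 0$.

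Granting $c=a\cdot b\neq 0$, I would verify the regularity equations for $c$ by a squeeze. First, every element of the stabilizer $\h c$ lies below $c;c\conv$: if $f;c=c$ then $f=f;\e x\le f;(c;c\conv)=c;c\conv$, using that $c\neq 0$ below $x;1;y$ makes $\e x\le c;c\conv$. Second, $\h a\cap\h b\seq\h c$, since each $f\in\h a\cap\h b$ is a functional element and functional elements distribute over Boolean meets from the left, so $f;c=f;(a\cdot b)=f;a\cdot f;b=a\cdot b=c$. Third, from $c\le a$ the monotony laws (\refL{laws}(iii) and \refL{laws.1}(vi)) give $c;c\conv\le a;a\conv=\suml_{f\in\h a}f$, and likewise $c;c\conv\le\suml_{f\in\h b}f$; since the elements of $\G x$ are distinct atoms, the Boolean product of these two sums is $\suml_{f\in\h a\cap\h b}f$, so $c;c\conv\le\suml_{f\in\h a\cap\h b}f$. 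Chaining the three inequalities, $\suml_{f\in\h a\cap\h b}f\le\suml_{f\in\h c}f\le c;c\conv\le\suml_{f\in\h a\cap\h b}f$, forcing equality throughout; in particular $c;c\conv=\suml_{f\in\h c}f$ (and $\h c=\h a\cap\h b$). The mirror‑image computation — with $c\conv;c$, $\k a$, $\k b$, $\k c$ in place of $c;c\conv$, $\h a$, $\h b$, $\h c$, and using $c\le b$ as well — gives $c\conv;c=\suml_{g\in\k c}g$. Hence $c=a\cdot b$ is a regular element below $x;1;y$.

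The arithmetic in the previous paragraph, and the reduction of $\e x\le a;b\conv$ to $a\cdot b\neq 0$, are routine. The step with real content is the nonvanishing $a\cdot b\neq 0$: relative primeness is indispensable there — for indices sharing a common factor $a\cdot b$ can be $0$, hence not regular — and it enters precisely through the $\gcd$ in \refL{index1}, which pins $\ins{a;b\conv}$ down to $1$ and thereby forces $a;b\conv$ to fill the whole square $x;1;x$; that is exactly what makes $a$ and $b$ overlap.
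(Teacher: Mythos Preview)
Your argument is correct, but it proceeds along a different line from the paper's proof.

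For the nonvanishing of $a\cdot b$, the paper works directly with stabilizers: from the relatively prime indices and \refL{index3} it gets $\h a\scir\h b=\G x$, computes $\tsum\h a;b=\xoy=\tsum\h b;a$, rewrites this as $a;a\conv;b=b;b\conv;a$, and then invokes Product Theorem~4.18(i) from \cite{givand3} to conclude $a\cdot b\neq 0$. You instead route through \refL{index1}: relative primeness forces $\ins{a;b\conv}=1$, hence $a;b\conv$ fills the square $x;1;x$, and the cycle law then yields $a\cdot b\neq 0$. Both arguments lean on the cyclic structure of $\G y$ at exactly one point (the paper via \refL{index3}, you via the $\gcd$ in \refL{index1}).

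For regularity of $a\cdot b$, the paper simply cites Product Theorem~4.13 from \cite{givand3} once $a\cdot b\neq 0$ is known. Your squeeze argument---$\tsum(\h a\cap\h b)\le\tsum\h c\le c;c\conv\le\tsum(\h a\cap\h b)$---is in effect an in-place proof of the piece of that theorem needed here, using only $c\neq 0$ and the regularity of $a$ and $b$ (the relative-primeness hypothesis plays no further role). The paper's version is shorter because it outsources to \cite{givand3}; yours is more self-contained for the regularity step, though you still rely on \refL{index1}, which itself packages Relative Product Theorem~5.14 from \cite{givand3}.
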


\begin{proof} The assumption on the indices of  $a$ and $b$ implies that the left
stabilizers $\h a$ and $\h b$ have relatively prime indices, and therefore
\begin{equation}\tag{1}\labelp{Eq:index6.1}
\h a;\h b=\h b;\h a=\G x\comma
\end{equation}
by \refL{index3}. The definition of $\h b$ as the left stabilizer of $b$ means that
\begin{equation}\tag{2}\labelp{Eq:index6.2}
\h b; b=  b\per
\end{equation}
Consequently,
\begin{equation}\tag{3}\labelp{Eq:index6.3}
\tsum\h a; b=\tsum\h a;\h b;b=\tsum\G x;b=(x;1;x);b=x;1;y\comma
\end{equation}
by \refEq{index6.2}, \refEq{index6.1}, the fact that $\tsum\G x=x;1;x$, by Corollary 3.5
in \cite{givand3}, and \refL{square}(ii).  Similarly,
\begin{equation}\tag{4}\labelp{Eq:index6.4}
\tsum\h b;a=\tsum\h b;\h a;a=\tsum\G x;a=(x;1;x);a=x;1;y\per
\end{equation}
Combine \refEq{index6.3} and \refEq{index6.4} to arrive at
\begin{equation}\tag{5}\labelp{Eq:index6.5}
\tsum\h a; b=\tsum\h b; a\per
\end{equation}

Use the assumption that $a$ and $b$ are regular elements, together with the
definition of such elements, and apply it to \refEq{index6.5} to obtain
\begin{equation}\tag{6}\labelp{Eq:index6.6}
a;a\conv; b=b;b\conv; a\per
\end{equation}
Part (i) of Product Theorem 4.18 in \cite{givand3} says that the condition in
\refEq{index6.6} is equivalent to the assertion that $a\cdot b\neq 0$.  Apply
Product Theorem 4.13 from \cite{givand3} to conclude that $a\cdot b$ is a regular
element below $x;1;y$.
\end{proof}

\newcommand\gll[1]{\G x/H_{#1}}
\newcommand\grr[1]{\G y/K_{#1}}
\newcommand\gldhh[3]{\G {#1}/(H_{#2};H_{#3})}
\newcommand\gldkh[3]{\G {#1}/(K_{#2};H_{#3})}
\newcommand\gldkk[3]{\G {#1}/(K_{#2};K_{#3})}

\begin{cor}\labelp{C:index7}  Let $x$ and $y$ be measurable
atoms\per If $\cc ai\le x;1;y$ for $i=1,\dots,n$ are regular
elements with pairwise relatively prime indices\comma
then $\tprod\limits_{i=1}^n \cc ai$ is a regular element below $x;1;y$.
\end{cor}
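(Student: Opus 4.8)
The plan is to argue by induction on $n$, the real content being carried by the two results just established: \refL{index6}, which says that the Boolean product of two regular elements below $x;1;y$ with relatively prime indices is again a regular element below $x;1;y$, and \refC{index5}, which computes the index of a Boolean product of regular elements with pairwise relatively prime indices as the product of their indices. For $n=1$ there is nothing to prove, since $a_1$ is a regular element below $x;1;y$ by hypothesis.

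Assume $n\ge 2$ and that the corollary holds for $n-1$, and set $b=\prod_{i=1}^{n-1}a_i$. The elements $a_1,\dots,a_{n-1}$ lie below $x;1;y$, are regular, and have pairwise relatively prime indices, so the inductive hypothesis yields that $b$ is a regular element below $x;1;y$; in particular $b\neq 0$, since regular elements are nonzero (if $a=0$ then its left stabilizer $\h a$ is all of $\G x$, so $a;a\conv=0$ cannot equal $\tsum\h a=\tsum\G x\neq 0$). Since $b\neq 0$ and the indices of $a_1,\dots,a_{n-1}$ are pairwise relatively prime, \refC{index5} applies and gives
\[\ins b=\prod_{i=1}^{n-1}\ins{a_i}\per\]
By hypothesis each $\ins{a_i}$ with $i<n$ is relatively prime to $\ins{a_n}$, and a product of integers each relatively prime to a fixed integer is again relatively prime to that integer; hence $\ins b$ and $\ins{a_n}$ are relatively prime. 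Now $b$ and $a_n$ are regular elements below $x;1;y$ with relatively prime indices, so \refL{index6} shows that $b\cdot a_n=\prod_{i=1}^{n}a_i$ is a regular element below $x;1;y$, completing the induction.

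I do not expect any genuine obstacle: once \refL{index6} and \refC{index5} are available, the argument is a brief induction. The one point deserving explicit mention is that the partial product $b$ must be verified to be nonzero before \refC{index5} can be applied, and this is immediate from the fact that regular elements are nonzero. The accompanying number-theoretic remark about coprimality of products is entirely elementary.
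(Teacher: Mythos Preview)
Your proof is correct and is precisely the induction the paper has in mind; the paper only writes ``The proof is by induction on $n$.  The details are left to the reader,'' and you have supplied exactly those details using \refL{index6} and \refC{index5}. Your explicit check that the partial product $b$ is nonzero (so that \refC{index5} applies) is the one nontrivial point, and the paper elsewhere cites Lemma~4.4 in \cite{givand3} for the fact that regular elements are nonzero, which agrees with your direct argument.
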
The proof is by induction on $n$.  The details are left to the reader.

\section{The representation theorem}\labelp{S:sec5}

We continue with the assumption that $\f A$ is a measurable relation algebra.
The assumption that the groups are finite and cyclic implies that the algebra $\f A$
is finitely measurable, and hence automatically atomic, by Theorem 8.3 in \cite{givand3}.
The goal is to show that $\f A$ is essentially isomorphic to a group relation
algebra $\cra GF$ for  one of the group frames $\mc F$ constructed in  GCD
\refT{gcd}.  This means that we must show that the completion of $\f A$ (in the
sense of the minimal complete extension of $\f A$) is isomorphic to $\cra GF$.
Scaffold Representation Theorem 7.4 in \cite{givand3} says that an atomic measurable
relation algebra $\f A$ is essentially isomorphic to some group relation algebra if
and only if it has a scaffold.  A \emph{scaffold} in $\f A$ is a system
$\langle\cc a\xy:\pair xy\in \mc E\rangle$ of atoms in $\f A$ that satisfies the
following conditions for all $\pair xy$ and $\pair yz$ in $\mc E$.
\begin{enumerate}
\item[(i)] $\cc a \xx=x$\per
\item[(ii)] $\cc a\yx=\cc a\xy\conv$\per
\item [(iii)] $\cc a\xz\le\cc a \xy;\cc a \yz$\per
\end{enumerate}
Thus, to prove the desired representation theorem for $\f A$, it
suffices to construct a scaffold in $\f A$.

Fix measurable atoms $x$ and $y$ in $\f A$ with $\pair x y$ in $\mc
E$, and consider a regular element $a\le \xoy$. Regular elements are
always non-zero, by Lemma 4.4 in \cite{givand3}, so $a\neq 0$.  If
functions $f$ and $g$ belong  to the same coset $H$ of $\h a$, then
the left translations $f;a$ and $g;a$ of $a$ are equal, and if they
belong to different cosets of $\h a$, then $(f;a)\cdot(g;a)=0$, by
Lemma 4.6(iii) in \cite{givand3}. Thus, it makes sense to write
$H;a$ whenever $H$ is a coset of $\h a$, and this just denotes the
element $f;a$ for some (any) element $f$ in $H$.  The left
translations $b=H;a$ by cosets $H$ of $\h a$ are mutually disjoint,
regular elements below $\xoy$ with the same normal stabilizer $\h a$
as $a$, and in fact these left translations form a partition of
$\xoy$, by Partition Lemma 4.9 in \cite{givand3}.  Similar remarks
apply to the right translations $a;K$ of $a$ by cosets $K$ of $\k a$
(in $\G y$).

Since $\f A$ is atomic, there must be an atom below $\xoy$, and such
atoms are regular elements with the same stabilizer, by Partition
Lemma 4.11 in \cite{givand3}.  Write $\h \xy$ and $\k\xy$ for the
left and right stabilizer of these atoms, and for a fixed atom $a$,
write $\vphi\xy$ for the quotient isomorphism $\vphi a$.  The choice
of $\vphi\xy$ is dependent on $a$, but a system of atoms can be
chosen with the following properties (see, for example, the remarks
in Section 7 of \cite{givand3}).
\begin{enumerate}
\item[(P1)] $\h\xx=\{x\}$, and $\vphi\xx$ is the identity isomorphism of $\gll \xx$\per
\item[(P2)] $\h \yx=\k\xy$, \qquad $\k\yx =\h \xy$,\qquad  and\qquad $\vphi\yz=\vphi\xy \mo$\per
\item[(P3)] $\vphi\xy[\h\xy;\h\xz]=\k\xy;\h\yz$\comma \qquad $ \vphi\yz[\k\xy;\h\yz]=\k\xz;\k\yz$\comma \qquad and\qquad
$\\ \vphi\xz[\h\xy;\h\xz]=\k\xz;\k\yz$\per
\item[(P4)] $ |\gldhh x\xy\xz|=|\gldkh y\xy\yz|=|\gldkk z\xz\yz|\per$
\end{enumerate}
In fact, the  isomorphism $\vphi\xy$ induces an isomorphism
$\hvphs\xy$ from $\gldhh x \xy\xz$ to $\gldkh y\xy\yz$, while
$\vphi\yz$ induces an isomorphism $\hvphs\yz$ from $\gldkh y\xy\yz$
to $\gldkk z\xz\yz$.  Property (P4) is an immediate consequence of
this observation.

Put $\map xy =|\gll \xy|$.

\begin{lm}[Index Lemma]\labelp{L:index70} Suppose $\pair xy$ and $\pair yz$ are
pairs of measurable atoms in $\mc E$\per
\begin{enumerate}
  \item[(i)] $\map xx=|\gll\xx|=|\G x/\{x\}|=|\G x|$\per
  \item[(ii)] $\map yx=\map xy$\per
  \item[(iii)] $\gcd\pair{\map xy}{\map xz}=\gcd\pair{\map xy}{\map  yz}=\gcd\pair{\map xz}{\map yz}$\per
\end{enumerate}
\end{lm}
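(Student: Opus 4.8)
The plan is to read off each of the three conclusions from the properties (P1)–(P4) together with the index-arithmetic lemmas of \refS{sec4}, applied to the atoms that were fixed just before the statement. Recall that $\map xy$ was \emph{defined} to be $|\gll\xy|$, i.e. the index of a chosen atom $\cc a\xy\le\xoy$ in the sense of \refD{index}, and that $\h\xy=\h{\cc a\xy}$ is its left stabilizer. So all three parts are really statements about indices of atoms.

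For part (i): by (P1) we have $\h\xx=\{x\}$, so $\map xx=|\gll\xx|=|\G x/\{x\}|=|\G x|$, which is exactly the chain of equalities asserted. For part (ii): by (P2) we have $\h\yx=\k\xy$, hence $\map yx=|\G y/\h\yx|=|\G y/\k\xy|$; but $|\G y/\k\xy|=|\G x/\h\xy|$ since $\vphi\xy$ is a quotient isomorphism from $\gll\xy$ onto $\G y/\k\xy$ (this is part of the package recalled right after \refD{index}), and $|\G x/\h\xy|=\map xy$ by definition. So $\map yx=\map xy$.

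Part (iii) is the substantive one, and it will follow from \refL{index1}. The key point is that the atom $\cc a\xz\le\xoz$ and the relative product $\cc a\xy;\cc a\yz\le\xoz$ are both regular elements of the rectangle $\xoz$, and by (iii) of the scaffold-style conditions $\cc a\xz\le\cc a\xy;\cc a\yz$. Now \refL{index1} (using that $\G y$ is cyclic) gives $\ins{\cc a\xy;\cc a\yz}=\gcd\pair{\ins{\cc a\xy}}{\ins{\cc a\yz}}=\gcd\pair{\map xy}{\map yz}$. On the other hand, one should show $\ins{\cc a\xy;\cc a\yz}=\ins{\cc a\xz}=\map xz$: the inclusion $\cc a\xz\le\cc a\xy;\cc a\yz$ forces $\h{\cc a\xz}\seq\h{\cc a\xy;\cc a\yz}$ by Lemma 4.14 of \cite{givand3}, hence $\ins{\cc a\xy;\cc a\yz}\le\ins{\cc a\xz}=\map xz$; but also $d=|\G x/(\h\xy;\h\xz)|$ by cyclic group theory (the composite of the multiples of $\map xy$ and of $\map xz$ is the multiples of $d=\gcd(\map xy,\map xz)$), and the induced isomorphism from the proof of \refL{index1} identifies this with $\ins{\cc a\xy;\cc a\yz}$, forcing $\map xz=\gcd\pair{\map xy}{\map xz}$ divides $\map xz$ with the reverse inequality. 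Assembling: $\map xz=\gcd\pair{\map xy}{\map yz}$. By the symmetry $\map yx=\map xy$ from part (ii), the same argument applied to the triple in the other two cyclic orders (or directly by relabeling $x,y,z$) yields $\map xy=\gcd\pair{\map xz}{\map yz}$ and $\map yz=\gcd\pair{\map xy}{\map xz}$ as well; combining these three equalities gives at once that $\gcd\pair{\map xy}{\map xz}$, $\gcd\pair{\map xy}{\map yz}$, and $\gcd\pair{\map xz}{\map yz}$ all coincide.

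The main obstacle is bookkeeping rather than depth: one must be careful that \refL{index1} is being applied with the cyclic group in the \emph{middle} slot (here $\G y$), and that the chosen atoms $\cc a\xy$, $\cc a\yz$, $\cc a\xz$ really do satisfy $\cc a\xz\le\cc a\xy;\cc a\yz$ — this is property (iii) of the scaffold conditions, which is exactly why the atoms were selected with properties (P1)–(P3) in the first place. The slightly delicate point is arguing that $\ins{\cc a\xy;\cc a\yz}$ equals $\map xz$ and not just divides it; this is handled by observing that $\cc a\xz$ is an atom and $\cc a\xy;\cc a\yz$ is a regular element with $\cc a\xz$ below it, and invoking \refL{index2} once one knows the two indices agree — but the cleanest route is the composite-subgroup computation above, which pins down $\ins{\cc a\xy;\cc a\yz}=|\G x/(\h\xy;\h\xz)|=\gcd\pair{\map xy}{\map xz}$, and then identifies this with $\map xz$ via the hypothesis $\cc a\xz\le \cc a\xy;\cc a\yz$ and \refL{index2}.
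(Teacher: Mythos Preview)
Your arguments for (i) and (ii) are correct and match the paper's.  For (iii), however, there is a genuine gap.  You invoke the inclusion $\cc a\xz\le\cc a\xy;\cc a\yz$ and call it ``property (iii) of the scaffold conditions, which is exactly why the atoms were selected with properties (P1)--(P3) in the first place.''  But this inclusion is \emph{not} among (P1)--(P4): property (P3) in the paper asserts only that $\vphi\xy[\h\xy;\h\xz]=\k\xy;\h\yz$, etc., which is a statement about composite subgroups, not about the atoms themselves.  The inclusion $\cc a\xz\le\cc a\xy;\cc a\yz$ is precisely scaffold condition (iii), and the entire remainder of \refS{sec5} (culminating in Scaffold \refT{scaffold}) is devoted to \emph{constructing} atoms that satisfy it.  At the point of \refL{index70} no such atoms are available, so you cannot use this inclusion.

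There is also an arithmetical error.  You conclude ``Assembling: $\map xz=\gcd\pair{\map xy}{\map yz}$,'' but this is false in general (take $\map xy=6$, $\map yz=2$, $\map xz=4$: all three pairwise gcd's equal $2$, yet $\map xz\neq\gcd\pair{\map xy}{\map yz}$).  The correct conclusion is only that the three gcd's coincide, not that any $\map{\,\cdot\,}{\,\cdot\,}$ equals a gcd of the other two.  The related step ``the induced isomorphism from the proof of \refL{index1} identifies $|\G x/(\h\xy;\h\xz)|$ with $\ins{\cc a\xy;\cc a\yz}$'' is not justified by \refL{index1} alone: that lemma identifies $\G x/\h{\cc a\xy;\cc a\yz}$ with $\G y/(\k\xy;\h\yz)$, and to further identify $\h{\cc a\xy;\cc a\yz}$ with $\h\xy;\h\xz$ one needs exactly (P3), which you do not cite.

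The paper's proof of (iii) avoids all of this by going straight through (P4).  Since $\G x$ is cyclic and $\h\xy$, $\h\xz$ have indices $\map xy$, $\map xz$, the composite $\h\xy;\h\xz$ is generated by $\gcd\pair{\map xy}{\map xz}$, whence $|\gldhh x\xy\xz|=\gcd\pair{\map xy}{\map xz}$; similarly $|\gldkh y\xy\yz|=\gcd\pair{\map xy}{\map yz}$ and $|\gldkk z\xz\yz|=\gcd\pair{\map xz}{\map yz}$.  Then (P4) equates the three quotient cardinalities, and hence the three gcd's.  No use of \refL{index1}, \refL{index2}, or any scaffold-type inclusion is needed.
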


\begin{proof} Property (P1) and the definition of $\map  xx $ imply
that\[ \map xx=|\gll\xx|=|\G x/\{0\}|=|\G x|\per\]
 Property (P2) and the definitions of $\map  xy $ and
 $\map yx$ imply that\[ \map yx=|\G y/\h\yx|=|\grr \xy|=|\gll\xy|=\map xy\per\]

Since $\h\xy$ and $\h\xz$ have indices $\map xy$ and $\map xz$, they
are respectively generated by (copies of) the integers $\map xy$ and
$\map xz$ modulo $\cc nx$.  Consequently, the group composition
$\h\xy;\h\xz$ (under the operation of relative multiplication in $\f
A$) is generated by the element $\gcd\pair{\map xy}{\map xz}$, so
that
\begin{equation}\tag{1}\labelp{Eq:index7.1}
 \gcd\pair{\map xy}{\map xz}=|\gldhh x \xy\xz|\per
\end{equation} Similarly,
\begin{equation}\tag{2}\labelp{Eq:index7.2}
 \gcd\pair{\map xy}{\map yz}=|\gldkh y \xy\yz|\per
\end{equation}
\begin{equation}\tag{3}\labelp{Eq:index7.3}
 \gcd\pair{\map xz}{\map yz}=|\gldkk z \xz\yz|\per
\end{equation} Combine \refEq{index7.1}--\refEq{index7.3} with property (P4) to arrive at (iii).
\end{proof}

\newcommand\sik{\sim_k}
\newcommand\akxy[2]{a_{#1}^{#2}}

Fix a prime number $p$ for the next definition and two lemmas, and in terms of this prime,
define a binary relation $
\sik$ as follows.
\begin{df}\labelp{D:index8}
For $x$ and $y$ in $I$, define $x\sik y$ if and only if $x= y$\comma  or $\pair xy$
is in $\mc E$ and $p^k$ divides $\map xy$.
\end{df}

\begin{lm}\labelp{L:index9} The relation $\sik$ is an equivalence relation
on the set $I$\per
\end{lm}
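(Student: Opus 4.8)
The plan is to verify the three defining properties of an equivalence relation — reflexivity, symmetry, and transitivity — for $\sik$ on $I$, using the Index Lemma (\refL{index70}) at the key points.

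Reflexivity is immediate from the definition: $x\sik x$ holds because the first disjunct $x=x$ is satisfied. For symmetry, suppose $x\sik y$ with $x\neq y$; then $\pair xy\in\mc E$ and $p^k\mid\map xy$. Since $\mce$ is an equivalence relation, $\pair yx\in\mc E$ as well, and by \refL{index70}(ii) we have $\map yx=\map xy$, so $p^k\mid\map yx$, giving $y\sik x$.

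The main work is transitivity. Suppose $x\sik y$ and $y\sik z$. If any two of $x,y,z$ coincide, the conclusion $x\sik z$ follows trivially (e.g., if $x=y$ then $x\sik z$ is just $y\sik z$; if $x=z$ then $x\sik z$ holds by reflexivity). So assume $x,y,z$ are pairwise distinct. Then $\pair xy$ and $\pair yz$ are in $\mce$, and since $\mce$ is an equivalence relation, $\pair xz\in\mce$ too. We know $p^k\mid\map xy$ and $p^k\mid\map yz$, hence $p^k\mid\gcd\pair{\map xy}{\map yz}$. By \refL{index70}(iii), $\gcd\pair{\map xy}{\map yz}=\gcd\pair{\map xy}{\map xz}$, which divides $\map xz$; therefore $p^k\mid\map xz$, and since $\pair xz\in\mce$, this gives $x\sik z$.

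The only subtlety — and it is a minor one — is the bookkeeping of the degenerate cases where two of the three elements coincide, since the definition of $\sik$ treats the diagonal separately from pairs in $\mce$; but each such case collapses to reflexivity or to one of the hypotheses directly. The substantive content is entirely carried by parts (ii) and (iii) of the Index Lemma, so no further relation-algebraic machinery is needed.
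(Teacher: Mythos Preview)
Your proof is correct and follows essentially the same route as the paper's: reflexivity by definition, symmetry via $\map yx=\map xy$ from \refL{index70}(ii), and transitivity by reducing to the case of three distinct atoms and invoking \refL{index70}(iii) to carry the divisibility by $p^k$ from $\gcd\pair{\map xy}{\map yz}$ over to $\map xz$. The only cosmetic difference is which equality in \refL{index70}(iii) you cite (you use $\gcd\pair{\map xy}{\map yz}=\gcd\pair{\map xy}{\map xz}$, the paper uses $\gcd\pair{\map xy}{\map yz}=\gcd\pair{\map xz}{\map yz}$), but either yields $p^k\mid\map xz$ immediately.
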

\begin{proof} The relation is automatically reflexive, by its very definition. For
symmetry, suppose that $x\sik y$ and $x\neq y$.  In this case $\pair xy$ belongs
to $\mc E$ and $p^k$ divides $\map xy$.  The relation $\mc E$ is symmetric, so it
contains $\pair yx$, and $\map yx=\map xy$, so $p^k$ divides $\map yx$. Thus,
$y\sik x$, by \refD{index8}.

Turn now to transitivity.  Assume that $x\sik y$ and $y\sik z$.  If
two of these atoms are equal, then the proof of transitivity is
trivial, so suppose that all three atoms are distinct.  The
hypotheses and \refD{index8} imply that $p^k$ divides $\map xy$ and
$\map yz$, so it divides their  greatest common divisor.  Since
\[ \gcd\pair{\map xz}{\map yz}=\gcd\pair{\map xy}{\map yz}\]
 by Index \refL{index70}, it follows that $p^k$ divides $\map xz$.  Also,
 $\mc E$ is transitive, so the pair $\pair xz$ is  in $\mc E$.
 Therefore, $x\sik z$, by \refD{index8}.
\end{proof}

\begin{lm}\labelp{L:index10} For each prime $p$\comma there is a system
of elements \[\langle \akxy \xy k:k\ge 0\text{ and }x\sik y\rangle\] with
the following properties  whenever $x\sik y$ and $y\sik z$.
\begin{enumerate}
  \item[(i)] $ \akxy \xy k$ is a regular element below $\xoy$\comma and $\akxy\xy k=x;1;y$ when $x\neq y$ and  $k=0$.
   \item[(ii)]  If $x=y$, then $\akxy \xy k = x$\per
  \item[(iii)]  If $x\neq y$, then $\ins{\akxy \xy k} = p^k$\per
  \item[(iv)] $\akxy \yx k=(\akxy \xy k)\conv$\per
  \item[(v)] $\akxy \xz k\le \akxy \xy k;\akxy \yz k$\comma and equality holds when $x\neq z$\per
  \item[(vi)] $\akxy \xy k\le \akxy \xy {k-1}$ for $k\ge 1$\per
\end{enumerate}
\end{lm}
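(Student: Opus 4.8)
The plan is to construct the elements $\akxy{xy}{k}$ by induction on $k$, building them one $\sik$-class at a time. Since $\sik$ is an equivalence relation (\refL{index9}), and $\sik\seq\sim_{k-1}$ because $p^{k-1}$ divides $p^{k}$, every triple $x,y,z$ occurring in conditions (iii)--(vi) lies in a single $\sik$-class; so it is enough, for each $k$ and each $\sik$-class $C$, to produce a local system $\langle\akxy{yz}{k}:y,z\in C\rangle$ satisfying the six conditions within $C$ and with $\akxy{yz}{k}\le\akxy{yz}{k-1}$ for $y,z\in C$. For $k=0$ we have $\sim_{0}=\mce$, and I would set $\akxy{xy}{0}=\xoy$ when $\pair xy\in\mce$ with $x\neq y$, and $\akxy{xx}{0}=x$. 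Then (ii) and (iv) are immediate and (vi) is vacuous, while (i) and (iii) reduce to the fact that $\xoy$ is regular of index $1=p^{0}$: writing $a_{0}$ for an atom below $\xoy$, one has $\xoy=(\tsum\G x);a_{0}$ (Corollary 3.5 in \cite{givand3} together with \refL{square}(ii)), whence $\xoy;(\xoy)\conv=(\tsum\G x);(a_{0};a_{0}\conv);(\tsum\G x)=\tsum\G x$, a sum of functional elements, and the left stabilizer of $\xoy$ is all of $\G x$. Condition (v) for $k=0$ is exactly \refL{square}(ii) and (iii).

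For the inductive step, suppose the system has been built through level $k-1$. Fix a $\sik$-class $C$, pick a base point $u\in C$, and set $\akxy{uu}{k}=u$ and $\akxy{yy}{k}=y$ for all $y\in C$. For each $y\in C$ with $y\neq u$ I would define $\akxy{uy}{k}$ as follows. Since $u\sik y$, $p^{k}$ divides $\map uy$, which divides $|\G u|$; hence the cyclic group $\G u$ has a subgroup $N$ of index $p^{k}$, and $\tsum N$ --- the Boolean sum of the functional elements of $N$ --- is a regular element of $u;1;u$ of index $p^{k}$, its left stabilizer being $N$. Choosing an atom $b$ below $\akxy{uy}{k-1}$ (possible, as regular elements are nonzero), \refL{index1} shows that $\akxy{uy}{k}:=(\tsum N);b$ is a regular element below $u;1;y$ of index $\gcd(p^{k},\map uy)=p^{k}$; and since $N\seq\h{\akxy{uy}{k-1}}$ --- the indices being $p^{k}$ and $p^{k-1}$ in the cyclic group $\G u$ --- we get $\akxy{uy}{k}=\tsum_{f\in N}f;b\le\akxy{uy}{k-1}$, because each $f\in N$ satisfies $f;b\le f;\akxy{uy}{k-1}=\akxy{uy}{k-1}$. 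Finally, set $\akxy{yu}{k}=(\akxy{uy}{k})\conv$ for $y\neq u$, and $\akxy{yz}{k}=(\akxy{uy}{k})\conv;\akxy{uz}{k}$ for distinct $y,z\in C\setminus\{u\}$.

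Verifying (i)--(v) at level $k$ is routine. Condition (ii) holds by construction; for (i), (iii), and (iv) one splits into the cases $x=y$, $x=u$, $y=u$, and $x,y\notin\{u\}$, using that the converse of a regular element is again regular of the same index (its left stabilizer is the right stabilizer of the original), that a relative product of regular elements is regular (Relative Product Theorem 5.14 in \cite{givand3}), and that $\ins{a;b}=\gcd(\ins a,\ins b)$ by \refL{index1} (where the cyclicity of $\G u$ is used). For (v) I would split into $x=z$ and $x\neq z$. If $x=z$, then $\akxy{xy}{k};\akxy{yx}{k}=\akxy{xy}{k};(\akxy{xy}{k})\conv=\tsum\h{\akxy{xy}{k}}$, which contains the group identity $x$, so $\akxy{xx}{k}=x\le\akxy{xy}{k};\akxy{yx}{k}$. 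If $x\neq z$, a short computation sorting out which of $x,y,z$ equals $u$ --- using $\akxy{uy}{k};(\akxy{uy}{k})\conv=\tsum\h{\akxy{uy}{k}}\ge u$ together with \refL{square}(iii) --- gives $\akxy{xz}{k}\le\akxy{xy}{k};\akxy{yz}{k}$; both sides are then regular elements below $\xoz$ of index $p^{k}$ (the right side by \refL{index1}, the left by (iii)), so \refL{index2} upgrades this to equality.

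The heart of the proof is condition (vi), and here the inductive hypothesis does exactly the right work. For $x\neq y$ with $x,y\notin\{u\}$ we have $\akxy{xy}{k}=(\akxy{ux}{k})\conv;\akxy{uy}{k}$, with $\akxy{ux}{k}\le\akxy{ux}{k-1}$ and $\akxy{uy}{k}\le\akxy{uy}{k-1}$ by the choices above; monotony of converse (\refL{laws}(iii)) and of relative multiplication (\refL{laws.1}(vi)) gives $\akxy{xy}{k}\le(\akxy{ux}{k-1})\conv;\akxy{uy}{k-1}$, and this last element equals $\akxy{xu}{k-1};\akxy{uy}{k-1}=\akxy{xy}{k-1}$ by conditions (iv) and (v) \emph{at level $k-1$} --- it is the equality clause of (v), available because $x\neq y$. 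The cases $x=y$, $x=u$, and $y=u$ of (vi) are immediate. The main obstacle is precisely this organizational point. One is tempted to make the base point chosen at level $k$ for $C$ agree with the base point chosen at level $k-1$ for the $\sim_{k-1}$-class containing $C$; but that cannot be done in general, since a $\sim_{k-1}$-class may split into several $\sik$-classes, only one of which can contain the earlier base point. The way around it is to notice that conditions (iv) and (v) at level $k-1$ already force $\akxy{xy}{k-1}=\akxy{xu}{k-1};\akxy{uy}{k-1}$ for \emph{every} $u$ in the class (as long as $x\neq y$), so that the composite of the two freely chosen index-$p^{k}$ pieces automatically lands below $\akxy{xy}{k-1}$. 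Once this is seen, the base points at the various levels need not be coordinated at all, and everything else is bookkeeping resting on the results of \refS{sec4} and \cite{givand3}.
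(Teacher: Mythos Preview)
Your proposal is correct and follows essentially the same approach as the paper's proof: induction on $k$, with the base case $\akxy{xy}{0}=\xoy$ (or $x$ on the diagonal), and at level $k$ a choice of a base point in each $\sik$-class, followed by building ``spokes'' from each point to the base point using the subgroup of index $p^{k}$ applied to an atom below the level-$(k-1)$ element, and finally defining the general $\akxy{xy}{k}$ as a composite through the base point. The only cosmetic differences are that the paper orients its spokes toward the base point (writing $c_{y\bar x}$ and setting $\akxy{xy}{k}=c_{x\bar x};c_{\bar x y}$), whereas you orient yours outward from it, and that the paper cites Theorem~9.1 of \cite{givand3} directly for the regularity and stabilizer of $\tsum L;d$, while you factor this through the regularity of $\tsum N$ together with \refL{index1}. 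Your closing remark---that conditions (iv) and (v) at level $k-1$ already give $\akxy{xy}{k-1}=\akxy{xu}{k-1};\akxy{uy}{k-1}$ for \emph{any} $u$ in the class, so that base points need not be coordinated across levels---is precisely the mechanism the paper uses to verify (vi).
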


\begin{proof}  The construction is by induction on $k$ starting at $k=0$.
In this case, the definition of $\akxy \xy k$ is dictated by the first two conditions:
\begin{equation*}\tag{1}\labelp{Eq:index10.1}
\akxy \xy 0 =\begin{cases}   x &\text{if $x = y$,} \\
\xoy &\text{if $x\neq y$.}
\end{cases}
\end{equation*}
It is not difficult to verify that in this case properties (i)--(vi)
hold. If $x=y$, then the element $x\le\xox$ is regular with left and
right stabilizers $\{x\}$.  If $x\neq y$, then $\xoy$ is regular
with  left and right stabilizers $\G x$ and $\G y$ respectively.
Consequently,
\[\ins{\akxy \xy k}=|\G x/
\G x| = 1 = p^0\comma\] so properties (i)--(iii) hold.

For property (iv), observe that if $x=y$, then
\[\akxy \yx 0=y=x=x\conv=(\akxy \xy k)\conv,\] by \refL{laws}(v).
On the other hand, if $x\neq y$, then\[\akxy \yx  0=\yox=(\xoy)\conv=(\akxy \xy 0)\conv\comma\]
by \refL{square}(i).

The verification of property (v) breaks down into cases.  If $x=y$, then
\[\akxy \xy k;\akxy\yz k=x;\akxy \yz k=x;\akxy \xz k= \akxy \xz k\comma
\]
by condition (ii), the assumption that $x=y$, and \refL{square}(iii).  A similar argument
applies if $y=z$.  If $x=z$, then
\[\akxy \xy k;\akxy\yz k=\akxy \xy k;\akxy\yx k=\akxy\xy k;(\akxy \xy k)\conv=\tsum\h\xy\ge   x=\akxy \xx k=\akxy\xz k\comma\]
by the assumption that $x=z$, condition (iv) (which has already been shown to hold when $k=0$),
the regularity of $\akxy \xy k$ from condition (i) (which has already been shown to hold when $k=0$),
the fact that $x$ is in its left stabilizer $\h\xy$, condition (ii) (which has already been
shown to hold when $k=0$), and the assumption $x=z$.  If $x$, $y$, and $z$ are pairwise distinct, then
\[\akxy \xy k;\akxy\yz k=(\xoy);(\yoz)=\xoz=\akxy \xz k\comma
\]by the definition in \refEq{index10.1} and \refL{square}(ii).

Condition (vi) holds vacuously.

\newcommand\sil[3]{{#1}\sim_{#2}{#3}}
\newcommand\br[1]{\bar{#1}}

Assume now that $\akxy \xy{k-1}$ has been defined for all pairs $\pair xy$
with $\sil x{k-1} y$ so that conditions (i)--(vi) hold (with $k-1$ in place of $k$, and $k\ge 1$).
For each measurable atom $x$ in $I$, choose a representative $\br x$ of the equivalence class $x/\sik \,\,$.
Thus,
\begin{equation}\tag{2}\labelp{Eq:index10.2}
x \sik y\qquad\text{if and only if}\qquad \br x=\br y\per
\end{equation}

\newcommand\cb[2]{c_{{#1}{\br {#2}}}}
\newcommand\cbb[2]{c_{{\br {#1}}{#2}}}

The next step is to construct for each element $y\sik x$  an element
$\cb yx\le y;1;\br x$ as follows.  If $y=\br x$, put
\begin{equation}\tag{3}\labelp{Eq:index10.3}
\cb yx=\br x\per
\end{equation}
If $y\neq \br x$, then write $b= \akxy {y\br x}{k-1}$, and fix an
atom $d\le b$. Such an atom exists because $b$ is a regular element,
by the induction hypothesis for  condition (i), and hence $b$ is
non-zero.  The  atomicity of $\f A$ implies that every non-zero
element is above an atom. The element $b$ has index $p^{k-1}$ by the
induction hypothesis for condition (iii), so the subgroup $\h b$ is
generated by $p^{k-1}$.  Let $L$ be the subgroup of $\G x$ generated
by $p^k$. The element $p^{k-1}$ generates $\h b$, and $p^k$ is a
multiple of $p^{k-1}$, so $p^k$ belongs to $\h b$, and therefore $L$
is included in $\h b$. On the other hand, $\map y{\br x}$ generates
$\h {y\br x}$, and $p^k$ divides $\map y{\br x}$, so $\h {y{\br x}}$
must be included in $L$.  Thus,
\begin{equation}\tag{4}\labelp{Eq:index10.4}
\h{y\br x}\seq L\seq \h b.
\end{equation}  Observe that
\begin{equation}\tag{5}\labelp{Eq:index10.5}
|\G x/L|=p^k\comma
\end{equation}
since $p^k$ generates $L$.

Put
\begin{equation}\tag{6}\labelp{Eq:index10.6}
\cb yx=\tsum L;d\per
\end{equation}
Because $L$ is a subgroup of $\G x$, and $d$ is an atom satisfying $d\le b \le y;1;\br x$
(the second inequality uses the induction hypothesis for condition  (i)),
the element $\cb yx$ is a regular element below $y;1;\br x$, and its left
stabilizer is $L$, by Theorem 9.1 in \cite{givand3}.  Consequently,
 \begin{equation}\tag{7}\labelp{Eq:index10.7}
\ins{\cb yx}=|\G x/L|=p^k\comma
\end{equation} by \refEq{index10.5}.  The preceding observations and
\refEq{index10.3} show that conditions (i)--(iii) of the lemma hold
with $\cb yx$ in place of $\akxy {y\br x} k$.

For $y\neq \br x$. define
\begin{equation}\tag{8}\labelp{Eq:index10.8}
\cbb xy=\cb yx\conv\comma
\end{equation}
and observe that since the converse of a regular element below $y;1;\br x$ is a
regular element below $\br x ;1;y$, with the left and right stabilizers reversed,
by Converse Theorem 5.13 from \cite{givand3}, the element defined in \refEq{index10.8}
must be regular, below $\br x;1;y$, and   have the same index as $\cb yx$, so that
 \begin{equation}\tag{9}\labelp{Eq:index10.9}
\ins{\cbb xy}=p^k\comma
\end{equation}
by \refEq{index10.7}.

For arbitrary $x$ and $y$ in $I$ with $x\sik y$, define
\begin{equation*}\tag{10}\labelp{Eq:index10.10}
\akxy \xy k =\begin{cases}   x &\text{if $x = y$,} \\
\cb xx;\cbb xy &\text{if $x\neq y$.}
\end{cases}
\end{equation*}
Observe that $\akxy \xy k$ is well defined by \refEq{index10.2}. The
relative product of a regular element below $x;1;\br x$ and a
regular element below $\br x;1;y$ is a regular element below $\xoy$,
by Relative Product Theorem 5.16 in \cite{givand3}, so condition (i)
of the lemma is satisfied when $x\neq y$, and it is trivially
satisfied when $x=y$.  Condition (ii) is automatically satisfied, by
\refEq{index10.10}.

Turn to the verification of condition (iii).  Assume $x\sik y$ are
distinct. Use   \refEq{index10.10}  and \refL{index1} (with $\cb xx$
and $\cbb xy$ in place of $a$ and $b$ respectively)
 to obtain
 \begin{equation}\tag{11}\labelp{Eq:index10.11}
\ins{\akxy \xy k}=\ins{\cb xx;\cbb xy}=\gcd\pair{\ins{\cb xx}}{\ins{\cbb xy}}\per
\end{equation}
The value of $\ins{\cb xx}$ is either $p^k$ or $|\G x|$ according to
whether $x\neq \br x$ or $x=\br x$, by \refEq{index10.7} and
\refEq{index10.3}, and similarly for $\cbb xy$, by
\refEq{index10.7},\refEq{index10.3}, and \refEq{index10.8}.  At
least one of them must be $p^k$ since $y\neq x$, so the value of
\refEq{index10.11} is $p^k$. This completes the verification of
condition (iii).

Take up now condition (iv).  If $x=y$, then
\[\akxy\yx k=y=x=x\conv=(\akxy \xy k)\conv\comma\]

by \refEq{index10.10} and \refL{laws}(v).  If $x\sik y$ are
distinct, then\begin{equation*} \akxy\yx k=(\cb y y;\cbb y x)=\cbb
yy\conv;\cb xy\conv= (\cb xy;\cbb yy)\conv=(\cb xx;\cbb
xy)\conv=(\akxy\xy k)\conv\comma\end{equation*} by
\refEq{index10.10}, \refEq{index10.8},   the second involution law,
\refEq{index10.2}, and \refEq{index10.10}.

The next verification is of condition (v). Assume that $x\sik y\sik
z$, and consider first the cases when at least two of the three
atoms are equal.  If $x=y$, then\[\akxy\xy k;\akxy \yz k=x;\akxy \yz
k=x;\akxy \xz k= \akxy \xz k\comma\] by \refEq{index10.10}, the
assumption that $x=y$, and \refL{square}(iii).  The argument when
$y=z$ is similar.    If $x=z$, then
\[\akxy\xy k;\akxy \yz k=\akxy\xy k;\akxy \yx k=\akxy\xy k;(\akxy \xy k)\conv=\tsum\h\xy\ge x=\akxy\xz k\comma\]
by the assumption that $x=z$, condition (iv), the regularity of $\akxy \xy k$,
which is ensured by condition (i), the fact that $\h\xy$ contains $x$, monotony,
and \refEq{index10.10}.

Assume now that the atoms $x$, $y$, and $z$ are mutually distinct.
The element $\cbb yy$ is regular, and it is the converse of $\cb
yy$, by \refEq{index10.8}, so
\begin{equation}\tag{12}\labelp{Eq:index10.12}
\cbb yy;\cb yy=\cb yy\conv;\cb yy=\tsum\k {y\br y}\ge \br y
\end{equation}
(the last step uses monotony and the fact that $\br y$ is in
$\k{y\br  y}$). Consequently, \begin{multline*} \akxy\xy k;\akxy\yz
k=(\cb xx;\cbb xy);(\cb yy;\cbb y z)=(\cb xx;\cbb yy);(\cb yy;\cbb y
z) \\ =\cb xx;(\cbb yy;\cb yy);\cbb y z \ge  \cb xx;\br y;\cbb y z=
\cb xx; \cbb y z= \cb xx; \cbb x z=\akxy\xz k\comma \end{multline*}
by \refEq{index10.10}, \refEq{index10.2}, the associative law,
\refEq{index10.12} and monotony,  \refL{square}(iii),
\refEq{index10.2}, and \refEq{index10.10}.  This argument shows that
\begin{equation}\tag{13}\labelp{Eq:index10.13}
\akxy\xz k\le \akxy\xy k;\akxy \yz k\per
\end{equation}
On the other hand,
\begin{multline}\tag{14}\labelp{Eq:index10.14}
\ins{ \akxy\xy k;\akxy \yz k}=\gcd\pair{\ins{\akxy \xy k}}{\ins{\akxy \yz k}}\\=\gcd\pair{ p^k}{ p^k}=p^k=\ins{\akxy\xz k}\comma
\end{multline} by condition (i), \refL{index1}, condition (iii), and the assumption
on $x$, $y$, and $z$. Use condition (i), \refEq{index10.13}, \refEq{index10.14}, and
\refL{index2} (with $\akxy\xz k$ and $\akxy\xy k;\akxy\yz k$ in place of $a$ and $b$) to conclude that
\begin{equation*}
\akxy\xz k= \akxy\xy k;\akxy \yz k\per
\end{equation*}

Turn finally to the verification of condition (vi).  If $x=y$, then
\[\akxy\xy k=x=\akxy\xy{k-1}\comma\] by \refEq{index10.10} and the induction
hypothesis for condition (ii), so condition (vi) holds in this case.  Assume now that $x\sik y$ are
distinct.  At least one of $x$ and $y$ must be different from $\br x$, say it is $y$.  If $x=\br x$, then
\begin{equation}\tag{15}\labelp{Eq:index10.15}
\akxy\xy k=\cb x x;\cbb x y=\br x;\cbb x y=\cbb x y=\cb
yx\conv\comma
\end{equation}
by \refEq{index10.10}, \refEq{index10.3}, \refL{square}(iii), and
\refEq{index10.8}.  Write $b=\akxy {y\br x}{k-1}$.  The element $\cb
yx$ is defined to be $\tsum L;d$, where $d$ is some atom below $b$,
and $L$ satisfies the inclusions in \refEq{index10.4}.  The second
inclusion in \refEq{index10.4} implies that $L;d\seq \h b;d$, and
therefore
\begin{equation}\tag{16}\labelp{Eq:index10.16}
\cb y x=\tsum L;d\le \tsum \h b;d\le \tsum \h b ;b=b=\akxy {y\br x} {k-1}\comma
\end{equation}
by \refEq{index10.6}, monotony, the fact that $d\le b$, the definition of $\h b$ as the
stabilizer of $b$, and the definition of $b$.  Apply \refEq{index10.16}, monotony, and the
induction hypothesis for condition (iv) to arrive at
\begin{equation}\tag{17}\labelp{Eq:index10.17}
\cb y x\conv\le(\akxy {y\br x} {k-1})\conv=\akxy {\br x y}{k-1}\per
\end{equation}  With the help of \refEq{index10.15}, \refEq{index10.17},
\refL{square}(iii), the assumption  that $x=\br x$,    the induction
hypotheses for (ii) and (v),   and the assumption that $y\neq \br
x$, conclude that\[\akxy \xy k=\cb y x\conv\le \akxy {\br x
y}{k-1}=\br x;\akxy{\br x y}{k-1}= x;\akxy{\br x y}{k-1}=\akxy {x\br
x} {k-1};\akxy {\br x y}{k-1}=\akxy{x y}{k-1}\per\]

Consider finally the case when both $x$ and $y$ are different from
$\br x$. An argument analogous to that of  \refEq{index10.16} shows
that
\begin{equation}\tag{18}\labelp{Eq:index10.18}
\cb x x\le \akxy {x\br x} {k-1}\per
\end{equation}
Also, it follows from \refEq{index10.17} and  \refEq{index10.8}  that
\begin{equation}\tag{19}\labelp{Eq:index10.19}
\cbb xy \le \akxy {\br x y} {k-1}\per
\end{equation}
Compute:\[\akxy \xy k=\cb xx;\cbb xy\le\akxy {x\br x}{k-1};\akxy{\br
x y}{k-1}=\akxy \xy {k-1}\comma\] by \refEq{index10.10},
\refEq{index10.18}, \refEq{index10.19}, monotony, and the induction
hypothesis for (v) (with $\br x$ and $y$ in place of $y$ and $z$
respectively). This completes the proof of the lemma.
\end{proof}

\newcommand\akixy[2]{a_{#1}^{k_{#2}}}
\newcommand\axy[1]{a_{#1}}

For each pair $\pair xy$ in $\mc E$ and each natural number $k\ge 0$
such that $x\sik y$, an element $\akxy\xy k$ has been constructed in
\refL{index10} such that the system of these elements possesses
certain properties.  The next step is to use these elements and
properties in order to construct a scaffold.  Fix a pair $\pair xy$
in $\mc E$, and let
\[\map xy=p_1^{k_1}\cdot\ldots\cdot p_n^{k_n}\] be the decomposition of $\map xy$ into distinct primes.
If $\map xy=1$, write $\map xy=2^0$ for the prime decomposition.
Thus, for  each index $i=1,\dots, n$, we have $x\sim_{k_i}  y$, and
$p_i^{k_i}$ is the largest power of $p_i$ that divides $\map xy$
when $x\neq y$.

We now change notation a bit by letting the prime numbers $\cs pi$
vary as $i$ varies over $1,\dots, n$, and writing $\akixy \xy i$ to
denote the element constructed in \refL{index10} using the prime
$p=\cs pi$ and the natural number $k=\cs ki$. Thus, in contrast to
the lemma, we let $\cs ki$ and $\cs kj$ represent powers of
different primes, namely $\cs pi$ and $\cs p j$, in the notations
$\akixy\xy i$ and $\akixy\xy j$.

\begin{df} For each pair $\pair xy$ in $\mc E$, write
\[\axy \xy=\akixy \xy 1\cdot\ldots\cdot \akixy\xy n=\tprod_{i=1}^n \akixy\xy i\comma\]
where $\map xy=p_1^{k_1}\cdot\ldots\cdot p_n^{k_n}$ is the prime
decomposition of $\map xy$\per\end{df}

\begin{figure}[htb]  \psfrag*{a}[B][B]{$10,800$} \psfrag*{b}[B][B]{$16$} \psfrag*{c}[B][B]{$27$}
\psfrag*{d}[B][B]{$25$}
 \psfrag*{e}[B][B]{$8$}  \psfrag*{f}[B][B]{$9$}  \psfrag*{g}[B][B]{$5$} \psfrag*{h}[B][B]{$4$}
 \psfrag*{i}[B][B]{$3$}  \psfrag*{j}[B][B]{$2$}
 \begin{center}\includegraphics*[scale=.9]{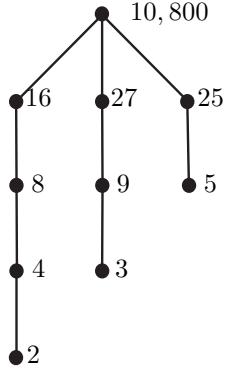}\end{center}
\caption{Diagram of the levels of construction of the scaffold.}\labelp{F:fig2}
\end{figure}

\begin{theorem}[Scaffold Theorem]\labelp{T:scaffold}
The system $\langle\axy\xy:\pair xy\in\mc E\rangle$ is a scaffold in~$\f A$\per
\end{theorem}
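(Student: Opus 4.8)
The plan is to verify that $\langle\axy\xy:\pair xy\in\mc E\rangle$ is a system of atoms satisfying the three scaffold conditions (i)--(iii). Scaffold condition (i) and the atom property come first. When $x=y$, \refL{index10}(ii) gives $\akixy\xx i=x$ for each $i$, so $\axy\xx=x$, which is the subidentity atom $x$. When $x\neq y$, the factors $\akixy\xy 1,\dots,\akixy\xy n$ are regular elements below $\xoy$ (\refL{index10}(i)) with pairwise relatively prime indices $p_1^{k_1},\dots,p_n^{k_n}$, so by \refC{index7} their Boolean product $\axy\xy$ is a regular element below $\xoy$, and by \refC{index5} its index is $\prod_i p_i^{k_i}=\map xy$. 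Since $\f A$ is atomic there is an atom $c\le\axy\xy$; but every atom below $\xoy$ is a regular element of index $\map xy=|\gll\xy|$ (Partition Lemma 4.11 in \cite{givand3} together with the definition of $\map xy$), so $\ins c=\ins{\axy\xy}$, and \refL{index2} forces $c=\axy\xy$. Thus $\axy\xy$ is an atom. For scaffold condition (ii), the case $x=y$ is immediate since $x\conv=x$ (\refL{laws}(v)); for $x\neq y$, Index \refL{index70}(ii) gives $\map yx=\map xy$, so $\axy\yx$ is built from the same prime decomposition, and since $\akixy\yx i=(\akixy\xy i)\conv$ (\refL{index10}(iv)) and converse, being a Boolean automorphism, distributes over meets (\refL{laws}(ii)), we obtain $\axy\yx=(\axy\xy)\conv$.

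The substance of the proof is scaffold condition (iii), $\axy\xz\le\axy\xy;\axy\yz$. The degenerate cases follow from what is already proved: if $x=y$ then $\axy\xy;\axy\yz=x;\axy\xz=\axy\xz$ by \refL{square}(iii), and similarly if $y=z$; and if $x=z\neq y$, then $\axy\xy;\axy\yx=\axy\xy;(\axy\xy)\conv=\tsum\h{\axy\xy}$, which dominates $x$ since $x\in\h{\axy\xy}$, so $\axy\xz=x\le\axy\xy;\axy\yx$. So assume $x$, $y$, $z$ are pairwise distinct (hence $\pair xz\in\mc E$ by transitivity of $\mc E$). Let $P$ be the set of primes dividing $\map xy\cdot\map yz\cdot\map xz$, and for $p\in P$ let $\alpha_p$, $\beta_p$, $\gamma_p$ be the exponents of $p$ in $\map xy$, $\map yz$, $\map xz$ respectively; put $u_p=\akxy\xy{\alpha_p}$, $v_p=\akxy\yz{\beta_p}$, $w_p=\akxy\xz{\gamma_p}$, with the convention that exponent $0$ denotes the corresponding full rectangle. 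Since intersecting with a full rectangle has no effect, $\axy\xy=\tprod_{p\in P}u_p$, $\axy\yz=\tprod_{p\in P}v_p$, and $\axy\xz=\tprod_{p\in P}w_p$.

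Now two facts finish the argument. First, $w_p\le u_p;v_p$ for every $p\in P$: by Index \refL{index70}(iii) the two smallest of $\alpha_p,\beta_p,\gamma_p$ coincide, so either $\alpha_p=\beta_p=:m$ (hence $\gamma_p\ge m$), in which case \refL{index10}(v) gives $u_p;v_p=\akxy\xy m;\akxy\yz m=\akxy\xz m$ (equality since $x\neq z$) while \refL{index10}(vi) gives $w_p=\akxy\xz{\gamma_p}\le\akxy\xz m=u_p;v_p$; or else, say, $\alpha_p>\beta_p$, which forces $\gamma_p=\beta_p=:m$, in which case \refL{index10}(vi) gives $u_p\le\akxy\xy m$, so $u_p;v_p\le\akxy\xy m;\akxy\yz m=\akxy\xz m=w_p$ by \refL{index10}(v), and then $\ins{u_p;v_p}=\gcd(p^{\alpha_p},p^m)=p^m=\ins{w_p}$ (by \refL{index1} and \refL{index10}(iii)) forces $u_p;v_p=w_p$ by \refL{index2} (the case $\alpha_p<\beta_p$ is symmetric). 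Second, $(\tprod_{p\in P}u_p);(\tprod_{p\in P}v_p)=\tprod_{p\in P}(u_p;v_p)$: the inclusion $\le$ is monotony of relative multiplication, and for the reverse both sides are regular elements below $\xoz$ --- the left by \refL{index1}, the right by \refC{index7} since the elements $u_p;v_p$ have pairwise relatively prime indices $p^{\min(\alpha_p,\beta_p)}$ --- whose indices both equal $\gcd(\map xy,\map yz)=\prod_{p\in P}p^{\min(\alpha_p,\beta_p)}$, by \refL{index1} and \refC{index5} (using $\ins{\axy\xy}=\map xy$ and $\ins{\axy\yz}=\map yz$), so \refL{index2} makes them equal. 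Combining the two facts, $\axy\xz=\tprod_{p\in P}w_p\le\tprod_{p\in P}(u_p;v_p)=\axy\xy;\axy\yz$, which is scaffold condition (iii).

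The main obstacle is the distributivity identity $(\tprod_{p\in P}u_p);(\tprod_{p\in P}v_p)=\tprod_{p\in P}(u_p;v_p)$ in the last step, since relative multiplication does not distribute over Boolean meets in general; the device is to show that both sides are regular below $\xoz$, compute that their indices agree, and invoke \refL{index2}, which pins a regular element down by its index among the regular elements it dominates. The per-prime inequality $w_p\le u_p;v_p$ is the other delicate point, because $\gamma_p$ can strictly exceed $\min(\alpha_p,\beta_p)$; there the monotonicity of $\akxy\xy k$ in $k$ furnished by \refL{index10}(vi) is what rescues the estimate, with \refL{index2} again absorbing the one case in which the two sides are only seen to be comparable rather than obviously equal.
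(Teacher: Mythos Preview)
Your proof is correct and follows essentially the same strategy as the paper: show each $\axy\xy$ is an atom by computing its index and invoking \refL{index2}, then for scaffold condition~(iii) work prime by prime, using the monotonicity of \refL{index10}(vi) together with an index-matching application of \refL{index2} to pin down $\axy\xy;\axy\yz$. The paper organizes the last step a little differently---it introduces $s_i=\min(k_i,\ell_i)$, shows both $\axy\xz$ and $\axy\xy;\axy\yz$ lie below $\tprod_i\abxy\xz s i$, and matches indices once, rather than first proving $w_p\le u_p;v_p$ per prime and then the distributivity identity---but since your $u_p;v_p$ coincides with $\abxy\xz{\min(\alpha_p,\beta_p)}$ in every case, the two arguments are the same in substance.
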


\begin{proof}
The first task is to prove that
\begin{equation}\tag{1}\labelp{Eq:index12.1}
\ins{\axy\xy}=\map xy
\end{equation}
for all $\pair xy\in \mc E$\per Consider the case when $x\neq y$.
The element $\akixy\xy i$ is  regular  and below  $\xoy$, and
\begin{equation}\tag{2}\labelp{Eq:index12.2}
\ins{\akixy\xy i}=p^{\cs ki}\comma
\end{equation}
by \refL{index10}(i),(iii).  In particular, these indices are
relatively prime to one another for distinct indices $i$.  There are
now two subcases to consider. If $\map xy > 1$, then
\begin{equation}\tag{3}\labelp{Eq:index12.3}
\axy \xy=\tprod_{i=1}^n \akixy\xy i
\end{equation} is a product of regular elements below $\xoy$ with mutually relatively
prime indices,
so it is a regular element below $\xoy$, and
\begin{equation}\tag{4}\labelp{Eq:index12.4}
\ins{\axy\xy}=\tprod_{i=1}^n\ins{\akixy\xy i}=\tprod_{i=1}^n p^{\cs ki}=\map xy\comma
\end{equation}
by \refC{index5} and \refEq{index12.2}. On the other hand, if $\map xy = 1$, then
\begin{equation}\tag{5}\labelp{Eq:index12.5}
\axy \xy=\xoy\comma
\end{equation}  by \refL{index10}(i), so the left stabilizer of $\axy\xy$ is $\G x$, and consequently
\begin{equation}\tag{6}\labelp{Eq:index12.6}
\ins{\axy\xy}=|
\G x/\G x|=1=\map xy\per
\end{equation}

In the remaining case, $x=y$, so that $\akixy\xy i=x$ for each $i$, by
\refL{index10}(ii), and therefore
\begin{equation}\tag{7}\labelp{Eq:index12.7}
\axy\xy=\axy \xx=x\comma
\end{equation}
by   \refEq{index12.3}.  The left stabilizer of $x$ is $\{x\}$, by the group
identity law, so
\begin{equation}\tag{8}\labelp{Eq:index12.8}
\ins{\axy\xx}=\ins{x}=|\G x/\{x\}|=|\G x|=\map xx\comma
\end{equation}
by \refEq{index12.7}, \refD{index}, and \refL{index70}(i). Combine
\refEq{index12.4}, \refEq{index12.6}, and \refEq{index12.8} to
conclude that \refEq{index12.1} holds in all cases.

The next task is to prove that the element $\axy\xy$ is an atom. As
was pointed out earlier, the algebra $\f A$ is finitely measurable,
and hence atomic.  Since $\axy\xy$ is regular, it is non-zero, and
therefore there must be an atom $d\le \axy\xy$.  The atom $d$ is
regular, by Corollary 4.7 in \cite{givand3}, and its left stabilizer
is $\h\xy$, by Corollary 4.16 in \cite{givand3}. Use the definition
of the index of an element, the definition of $\map xy$, and
\refEq{index12.1} to arrive at
\[\ins d=|\G x/\h\xy|=\map xy=\ins{\axy\xy}\per\]
Apply \refL{index2} to conclude that $d=\axy\xy$, and hence that
$\axy\xy$ is an atom with left stabilizer $\h\xy$.

It remains to verify the three scaffold conditions.  The first one
holds by \refEq{index12.7}.  To verify the second scaffold
condition, it suffices to check the case when $x\neq y$ (see Theorem
4.4 in \cite{giv1}).  If $\map xy>1$, then \refL{index10}(iv)
ensures that
 \begin{equation}\tag{9}\labelp{Eq:index12.9}
\akixy\yx i=(\akixy\xy i)\conv\comma
\end{equation} for each $i$, and consequently,
\begin{equation}\tag{10}\labelp{Eq:index12.10}
\axy\xy\conv=(\tprod_i\akixy\xy i)\conv=\tprod_i(\akixy\xy i)\conv=
\tprod_i\akixy\yx i =\axy\yx\comma
\end{equation}
by \refEq{index12.3} and the assumption that $\map xy>1$,
\refL{laws}(ii),  \refEq{index12.9}, and \refEq{index12.3} (with $y$
and $z$ in place of $x$ and $y$ respectively). If $\map xy =1$, then
$\map yx = 1$, and
\begin{equation}\tag{11}\labelp{Eq:index12.11}
\axy\xy\conv=(\xoy)\conv=\yox =\axy\yx\comma
\end{equation} by \refEq{index12.5} and \refL{square}(i).
Thus, the second scaffold condition holds in all cases, by
\refEq{index12.10} and \refEq{index12.11}.

It remains to verify the third scaffold condition.  Consider pairs
$\pair xy$ and $\pair yz$ in $\mc E$.  If $x=y$, then
\begin{equation}\tag{12}\labelp{Eq:index12.12}
\axy\xz=x;\axy\xz=\axy\xx;\axy\xz=\axy\xy;\axy\yz\comma
\end{equation}
by \refL{square}(iii), \refEq{index12.7}, and the assumption   $x=y$.  A similar argument applies if $y=z$.  If $x=z$, then
\begin{equation}\tag{13}\labelp{Eq:index12.13}
\axy\xz= \axy\xx=x\le
\tsum\h\xy=\axy\xy;\axy\xy\conv=\axy\xy;\axy\yx=\axy\xy;\axy\yz\comma
\end{equation}
by the assumption  $x=z$, \refEq{index12.7}, the fact that $x$ belongs to the
left stabilizer $\h\xy$, monotony, the regularity of the atom $\axy\xy$, and scaffold condition (ii), which has already been shown to hold.

 Assume now that $x$, $y$, and $z$ are all distinct.
 If $\map xy=1$, then \refEq{index12.5} holds, and therefore
\begin{equation}\tag{14}\labelp{Eq:index12.14}
\axy\xy;\axy\yz=\xoy;\axy\yz=\xoz \ge \axy\xz\comma
\end{equation}
by \refEq{index12.5}, \refL{square}(ii), and \refL{index10}(i). A
similar argument applies if $\map yz=1$.  We may therefore assume
that $\map xy>1$ and $\map yz>1$. Thus, $\map xy$ and $\map yz$ each
have at least one prime in their prime decompositions. Suppose
\begin{equation}\tag{15}\labelp{Eq:index12.15}
\map xy=p_1^{k_1}\cdot\ldots\cdot p_n^{k_n}=\tprod_{i=1}^n p_i^{\cs ki}\comma
\end{equation}
so that $\axy\xy$ has the form \refEq{index12.3}.  Forming the
product of $\axy\xy$ with the  element $x;1;y$ does not change the
value of $\axy\xy$, since $\axy\xy$ is below this element, by
\refL{index10}(i).  This amounts to forming the product of $\axy\xy$
with elements of the form
\newcommand\abxy[3]{a_{#1}^{{#2}_{#3}}}
$\abxy \xy\ell j$ in which $\cs \ell j=0$\comma by
\refL{index10}(i). The same reasoning applies to the values of
$\axy\yz$ and $\axy\xz$, so by multiplying such zero powers of
primes into the factorizations of $\map xy$, $\map yz$, and $\map
xz$, we may assume that
\begin{equation}\tag{16}\labelp{Eq:index12.16}
\map yz=p_1^{\ell_1}\cdot\ldots\cdot p_n^{\ell_n}=\tprod_{i=1}^n p_i^{\cs \ell i}\qquad\text{and}\qquad
\map xz=p_1^{j_1}\cdot\ldots\cdot p_n^{j_n}=\tprod_{i=1}^n p_i^{\cs ji}\comma
\end{equation}that is to say, the same primes $\cs pi$ occur in all three factorizations,
some of them raised to the zeroth power. Consequently,
\begin{equation}\tag{17}\labelp{Eq:index12.17}
\axy \yz=\tprod_{i=1}^n \abxy\yz \ell i\qquad\text{and}\qquad \axy
\xz=\tprod_{i=1}^n \abxy\xz ji\per
\end{equation}

Write
\begin{equation}\tag{18}\labelp{Eq:index12.18}
\cs si=\min\{\cs ki,\cs \ell i\}
\end{equation}
for each $i=1,\dots,n$, and observe that
\begin{equation}\tag{19}\labelp{Eq:index12.19}
\gcd\pair{\map xy}{\map yz}=\gcd\pair{\tprod_i p_i^{\cs k i}}{\tprod_i p_i^{\cs \ell i}}=\tprod_i p_i^{\cs s i}\per
\end{equation}
Since
\[\gcd\pair{\map xy}{\map yz}=\gcd\pair{\map xy}{\map xz}\comma\]
by \refL{index70}(iii), it follows from \refEq{index12.19} that
\begin{equation}\tag{20}\labelp{Eq:index12.20}
p_i^{\cs si}\quad\text{divides}\quad \map xz\per
\end{equation}
The definition of $\axy\xy$ implies that $\cs ki$ is the largest natural number such that
\newcommand\sikk[1]{\sim_{{#1}_i}}
$x\sikk k y$.  Similarly, $\cs \ell i$ is the largest natural number such that $y\sikk\ell z$\per  It follows from \refEq{index12.18} and the definition of the relation $\,\sikk s$ that $x\sikk s y$ and $y\sikk s z$, so
\begin{equation}\tag{21}\labelp{Eq:index12.21}
x\sikk s z\comma
\end{equation} by transitivity. Since $\cs ji$ is the largest natural number such that $x\sikk j z$, it follows from \refEq{index12.20} and \refEq{index12.21} that $\cs si\le\cs ji$, and therefore
\begin{equation}\tag{22}\labelp{Eq:index12.22}
\abxy\xz j i\le\abxy\xz s i\comma
\end{equation}
by \refL{index10}(vi).  Use \refEq{index12.17} and \refEq{index12.22} to conclude that
\begin{equation}\tag{23}\labelp{Eq:index12.23}
\axy\xz=\tprod_i\abxy\xz j i\le\tprod_i \abxy\xz s i\per
\end{equation}
The element $\axy\xz$ is an atom, so in particular, \refEq{index12.23} implies that
\begin{equation}\tag{24}\labelp{Eq:index12.24}
 \abxy\xz s i\neq 0\per
\end{equation}  Use \refC{index5}, \refL{index10}(iii), and \refEq{index12.19} to arrive at
\begin{equation}\tag{25}\labelp{Eq:index12.25}
 \ins{\tprod_i\abxy\xz s i}=\tprod_i \ins{\abxy\xz s i}=\tprod_i p_i^{\cs si}=\gcd\pair{\map xy}{\map yz}\per
\end{equation}

The indices of $\axy\xy$ and $\axy\yz$ are $\map xy$ and $\map yz$
respectively, by \refEq{index12.4}. Use this observation and
\refL{index1} to write
\begin{equation}\tag{26}\labelp{Eq:index12.26}
\gcd\pair{\map xy}{\map yz}=\gcd\pair{\ins{\axy\xy}}{\ins{\axy\yz}}=\ins{\axy\xy;\axy\yz}\per
\end{equation}
Combine \refEq{index12.26} with \refEq{index12.25} to arrive at
\begin{equation}\tag{27}\labelp{Eq:index12.27}
\ins{\axy\xy;\axy\yz}= \ins{\tprod_i\abxy\xz s i}\per
\end{equation} Use \refEq{index12.3}, \refEq{index12.17}, monotony, \refEq{index12.18},
and \refL{index10}(vi) to obtain
\begin{equation*}\tag{28}\labelp{Eq:index12.28}
\axy\xy;\axy\yz=(\tprod_{i}  \abxy\yz k i);(\tprod_{i} \abxy\yz \ell
i)\le \tprod_{i} ( \abxy\yz k i; \abxy\yz \ell i)\le \tprod_{i}
\abxy\yz s i\per
\end{equation*}
In view of \refEq{index12.28},   \refL{index2} (with
$\axy\xy;\axy\yz$ and $\tprod_{i}  \abxy\yz s i$ in place of $a$ and
$b$ respectively) may be applied to \refEq{index12.27}, and then
\refEq{index12.23} may be invoked, to conclude that
\begin{equation*}
\axy\xy;\axy\yz =  \tprod_i\abxy\xz s i \ge\axy\xz\per
\end{equation*}  This completes the verification of the third scaffold
condition and hence the proof of the theorem.
\end{proof}

Here, finally, is the representation theorem for measurable relation
algebras with finite cyclic groups.

\begin{theorem}[Representation
Theorem]\labelp{T:rep} If $\f A$ is a measurable relation
algebra\comma and if\comma for each measurable atom $x$\comma the
group $\G x$ is finite and cyclic\comma then $\f A$ is essentially
isomorphic to one of the cyclic group relation algebras constructed
in GCD Theorem \textnormal{\ref{T:gcd}}\per Hence, $\f A$ is
completely representable.
\end{theorem}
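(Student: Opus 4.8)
The proof is an assembly of the results established in Sections~\ref{S:sec3}--\ref{S:sec5}. First I would note that, since each group $\G x$ is finite, the algebra $\f A$ is finitely measurable, hence atomic by Theorem~8.3 in \cite{givand3}. This brings Scaffold Representation Theorem~7.4 of \cite{givand3} into play: an atomic measurable relation algebra is essentially isomorphic to a group relation algebra if and only if it has a scaffold. Thus the whole burden reduces to exhibiting a scaffold in $\f A$, and this is exactly the content of Scaffold Theorem~\ref{T:scaffold}, whose scaffold $\langle\axy\xy:\pair xy\in\mc E\rangle$ is built from the system of elements produced in Lemma~\ref{L:index10}. Combining these facts shows that the completion of $\f A$ is isomorphic to the group relation algebra $\cra G F$, where $\mc F$ is the group pair assembled from the stabilizers $\h\xy$ of the scaffold atoms together with their associated quotient isomorphisms $\vphi\xy$.

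It remains to recognize this group relation algebra as one of those constructed in GCD Theorem~\ref{T:gcd}. To do this I would set $\map xy=|\G x/\h\xy|$ for each pair $\pair xy$ in $\mc E$ and verify that the system $m=\langle\map xy:\pair xy\in\mc E\rangle$ satisfies the four index conditions of \refD{index}: condition~(i) is immediate from the definition of $\map xy$ as an index; condition~(ii) is part~(i) of Index Lemma~\ref{L:index70}; condition~(iii) is part~(ii) of that lemma; and condition~(iv) is part~(iii). Since $m$ satisfies the index conditions, GCD Theorem~\ref{T:gcd} furnishes a group frame carrying precisely these indices, so the group relation algebra to which $\f A$ is essentially isomorphic is one of the cyclic group relation algebras of that theorem, as the statement demands.

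Finally, for complete representability: the group relation algebra $\cra G F$ is a complete set relation algebra in which every supremum is an honest union, hence it is completely representable; and an atomic measurable relation algebra essentially isomorphic to a group relation algebra is itself completely representable, by Theorems~7.4 and~7.6 of \cite{givand3}. I do not expect a genuine obstacle at this stage, since all the substantive work---above all the recursive construction in Lemma~\ref{L:index10} and the factorization argument in Scaffold Theorem~\ref{T:scaffold}---has already been carried out. The one point requiring care is the bookkeeping that matches the indices of the scaffold atoms $\axy\xy$ with the arithmetic data $\map xy$ feeding the GCD construction, and this matching is precisely what Index Lemma~\ref{L:index70} guarantees.
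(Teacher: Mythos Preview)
Your proposal is correct and follows essentially the same strategy as the paper's own proof: reduce to atomicity via Theorem~8.3 of \cite{givand3}, invoke Scaffold \refT{scaffold} to obtain a scaffold, apply Scaffold Representation Theorem~7.4 of \cite{givand3}, and then identify the resulting group relation algebra with one from GCD \refT{gcd} by checking the index conditions via Index \refL{index70}. The paper also records a first route through Frame Theorem~7.3 of \cite{givand3} and \refT{3.9}, but its ``Alternatively'' paragraph is exactly your argument; your citation of Index \refL{index70} for conditions (ii)--(iv) and of the definition of $\map xy$ for condition (i) is precisely what the paper means when it says the index conditions hold ``by the proof of Scaffold \refT{scaffold}.''
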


\begin{proof} Here is a summary of the strategy that was outlined at the beginning of the section.
The groups $\G x$ are all assumed to be finite, so $\f A$ is
finitely measurable and therefore atomic. Using this fact, a
scaffold is constructed in  $\f A$, by Scaffold \refT{scaffold}.  A
measurable relation algebra with a scaffold is essentially
isomorphic to a full group relation algebra, by Scaffold
Representation Theorem 7.4 in \cite{givand3}.

To see which full group relation algebra, let $I$ be the set of
measurable atoms in $\f A$, and for each atom $x$ in $I$, let $\G x$
be the group of non-zero functions below the square $x;1;x$.  Take
$\mc E$ to be the set of pairs $\pair xy$ such that $x;1;y\neq 0$.
Fix a scaffold
\begin{equation}\tag{1}\labelp{Eq:index13.1}
\langle\axy\xy:\pair xy\in \mc E\rangle
\end{equation} in $\f A$.  For each pair $\pair xy$ in $\mc E$, take $\h\xy$ and $\k\xy$
to be the left and right stabilizers of
the atom $\axy\xy$\per  The function $\vphi\xy$ from $\gll \xy$ to
$\grr\xy$ defined for cosets $H$ of $\h\xy$ and $K$ of $\k\xy$ by
\[\vphi\xy(H)=K\qquad\text{if and only if}\qquad
H;\axy\xy=\axy\xy;K\] is an isomorphism.  The group pair $\mc
F=\pair G\vp$ consisting of the systems
\[G=\langle\G x:x\in I\rangle\qquad\text{and}\qquad \vp=\langle\vphi\xy:\pair xy \in \mc E\rangle\]
satisfies the group frame conditions, by Frame Theorem 7.3 in \cite{givand3},
and $\f A$ is essentially isomorphic to the full group relation algebra $\cra G F$,
by (the proof of) Theorem 7.4 in \cite{givand3}.  The group relation algebra $\cra GF$ is one of
the ones considered in \refT{3.9}, and therefore also in  GCD \refT{gcd} (up to isomorphism).

Alternatively, let $\map xy=|\gll\xy|$ and observe that the system
\[m=\langle\map xy:\pair xy\in E\rangle\] satisfies the index conditions by the proof of
Scaffold \refT{scaffold}.  Consequently, $\f A$ is essentially
isomorphic to the cyclic group relation algebra constructed in GCD
\refT{gcd} using the system $m$.
\end{proof}

\renewcommand\di{0\textnormal{\rq}}
\newcommand{\Aa}{\mathfrak{A}}

Theorem 4.30 in J\'onson-Tarski \cite{jt52} states that for a
relation algebra $\Aa$ the following are equivalent. (i) $\Aa$ is
isomorphic to a full set relation algebra. (ii) $\Aa$ is complete,
atomic, with all atoms $x$ satisfying $x\conv;1;x\le\ident$. The
hard part of this theorem is to show that (ii) implies (i). Assume
(ii) and let $x$ be a subidentity atom. Then $x\conv;1;x\le\ident$
by assumption, and thus $(x;1;x)\conv;(x;1;x)=x\conv;1;x\le\ident$
by \refL{laws}(v) and \refL{square}(i),(ii). This means that $x;1;x$
is functional. Thus the square $x;1;x$ is the sum of one functional
element, hence $x$ is measurable with measure 1. Since $\Aa$ is
atomic, the identity is a sum of atoms, and we have seen that these
atoms are measurable, hence $\Aa$ is measurable. Each of the
associated groups have one element, thus finite and cyclic. Since
$\Aa$ is complete, then $\Aa$ is isomorphic, and not just
essentially isomorphic, to a group relation algebra $\cra G F$ with
all the groups in $\mc F$ being one-element. It is not hard to see
that such a $\cra G F$ is isomorphic to a full set relation algebra.
We have proved the hard part of \cite[Theorem 4.30]{jt52} by using
\refT{rep}.

We note that a representation theorem is given in \cite{agmns} which
uses a generalization of the above condition (ii) in another
direction, not toward measurability.

Next we turn to pair-dense relation algebras.
Let $\Aa$ be a relation algebra. In \cite{ma91}, an element $x\in A$
is called a \emph{pair} if $x;\di;x;\di;x\le\ident$ and $x$ is
nonzero, where $\di$ denotes $-\ident$, and the algebra $\Aa$ is
called \emph{pair-dense} if the identity element $\ident$ is a sum
of pairs.

\begin{lm}\label{pair-lem}
Let $\Aa$ be an atomic relation algebra. Then (i) and (ii) below are
equivalent.
\begin{enumerate}
\item[(i)] $\Aa$ is pair-dense.
\item[(ii)] $\Aa$ is measurable with all the associated groups cyclic of order $\le 2$.
\end{enumerate}
\end{lm}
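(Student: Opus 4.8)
The plan is to attach to each subidentity atom $x$ the element $t=x;\di;x$ and to exploit the identity $x;1;x=x+t$, in which $t\le\di$ and $t\conv=t$. First I would record these basic facts: distributing relative multiplication and using $x;\ident=x$ and $x;x=x$ gives $x;1;x=x;(\ident+\di);x=x+x;\di;x=x+t$; the cycle law gives $(x;\di;x)\cdot\ident=0$, hence $t\le\di$; and $t\conv=t$ because converse is a Boolean automorphism fixing $\ident$. Thus $t=(x;1;x)\cdot\di$ and $(x;1;x)\cdot\ident=x$, so in particular $x$ is the \emph{only} subidentity atom below $x;1;x$. Finally, since $x;x=x$, the pair condition $x;\di;x;\di;x\le\ident$ for $x$ is \emph{equivalent} to $t;t\le\ident$, i.e.\ to $t$ being a symmetric functional element.

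For (ii)$\Rightarrow$(i): a measurable relation algebra has $\ident$ a sum of measurable atoms, so it suffices to show that a measurable atom $x$ with $|\G x|\le2$ is a pair. If $|\G x|=1$ then $x;1;x=x$, forcing $t=0$, and the pair condition is trivial. If $\G x=\{x,g\}$ then $x;1;x=x+g$, so (by uniqueness of $x$ below $x;1;x$) $g\le\di$ and $t=g$; a short computation with $x;g=g$ (\refL{square}(iii)) gives $g;\di;x\le x;1;x=x+g$, while the cycle law together with $g\conv;g=x$ gives $(g;\di;x)\cdot g=0$. Hence $x;\di;x;\di;x=g;\di;x\le(x+g)\cdot(-g)=x\le\ident$, so $x$ is a pair, and $\f A$ is pair-dense.

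For (i)$\Rightarrow$(ii): since $\f A$ is atomic, $\ident$ is the sum of its subidentity atoms, so it suffices to show that every subidentity atom $x$ is measurable with $\G x$ cyclic of order $\le2$. Writing $\ident=\suml P$ with $P$ a set of pairs, $x$ lies below some $p\in P$ (being a nonzero atom below the sum), and repeated use of monotony carries $p;\di;p;\di;p\le\ident$ down to $x$; so $x$ itself is a pair and $t$ is a symmetric functional element. Combining $t;t\le\ident$ with $t\le x;1;x$ and $(x;1;x)\cdot\ident=x$ gives $t;t\le x$, and when $t\neq0$ this forces $t;t\conv=t\conv;t=t;t=x$. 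The crucial step---the one I expect to be the main obstacle---is to prove that $t$ is $0$ or an atom. Here I would argue: if $s\le t$ is an atom then $s\conv\le t$ as well (as $t$ is symmetric), so $s\conv;s$, $s;s\conv$ and $s;s$ are all $\le t;t\le\ident$ and $\le x;1;x$, hence $\le x$ and, being nonzero, all equal $x$; then composing $s;s=s;s\conv$ on the left with $s\conv$ gives $x;s=x;s\conv$, whence $s\conv=s$ by \refL{square}(iii). Consequently, distinct atoms $s_1\neq s_2$ below $t$ would satisfy $s_1;s_2\le t;t\le\ident$, yet $s_1\cdot s_2\conv=s_1\cdot s_2=0$ forces $s_1;s_2\le\di$ by the cycle law, while $s_1\conv;s_1;s_2=x;s_2=s_2\neq0$ forces $s_1;s_2\neq0$---a contradiction. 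So at most one atom lies below $t$, and atomicity gives $t=0$ or $t$ an atom. Either way $x;1;x=x+t$ is a sum of functional elements, so $x$ is measurable, and the nonzero functional atoms below $x;1;x$, namely $x$ (and $t$), form a cyclic group of order at most $2$. This proves (i)$\Rightarrow$(ii).

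Apart from the atomicity of $t$ in the last paragraph, every step reduces to routine relation-algebraic arithmetic: the identity and distributive laws, $x;x=x$ for subidentity $x$, monotony, and the cycle law.
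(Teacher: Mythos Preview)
Your proof is correct and shares with the paper the central observation that, for a subidentity atom $x$, the pair condition $x;\di;x;\di;x\le\ident$ is equivalent to $t=x;\di;x$ being a (symmetric) functional element. Where you diverge is in how much external machinery you use. The paper's argument is shorter because it appeals to a result quoted from~\cite{givand3} at the start of \refS{sec4}: once $x$ is measurable, the nonzero functional elements below $x;1;x$ are automatically atoms. With this in hand, the paper simply notes that $x;1;x=x+t$ with both summands functional, so $x$ is measurable, and then the cited result forces $t$ to be $0$ or an atom---hence $|\G x|\le 2$; conversely, if $|\G x|\le 2$, the second atom (if any) must coincide with $t$, so $t$ is functional and $x$ is a pair. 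You instead prove the atomicity of $t$ from scratch via the cycle law and the self-converse property of atoms below $t$, and for (ii)$\Rightarrow$(i) you verify the inequality $g;\di;x\le x$ by a direct computation rather than invoking functionality of $g$. Your route is more self-contained and avoids the dependence on~\cite{givand3}; the paper's route is more economical. One small remark on (ii)$\Rightarrow$(i): having identified $t=g\in\G x$, you could finish in one line, since $g$ functional gives $t\conv;t=t;t\le\ident$ directly---your longer computation works but is not needed.
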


\begin{proof}
In the proof, we will use \refL{laws}-\refL{square} without
mentioning them. Let $x\le\ident$ be an atom, in particular $x$ is
nonzero. First we show that $x$ is a pair just in case it is
measurable with measure $\le 2$. Now, by definition, $x$ is a pair
just in case $x;\di;x;\di;x\le\ident$, we are going to show that
this latter holds just when $x;\di;x$ is functional. Indeed,
$x;\di;x$ is functional, by definition, just when
$(x;\di;x)\conv;(x;\di;x)\le\ident$, but $(x;\di;x)\conv;(x;\di;x) =
(x;\di;x);(x;\di;x) = x;\di;x;\di;x$. Note that each subidentity
element $x$ is functional by $x\conv;x=x\le\ident$.

Assume now that $x$ is a pair. Then $x;1;x$ is the sum of
$x;\ident;x$ and $x;\di;x$, both being functional (since
$x;\ident;x=x$). The first one, $x;\ident;x$ is nonzero by $x$ being
nonzero. If the second one, $x;\di;x$ is zero, then $x$ has measure
1, and if $x;\di;x$ is nonzero, then $x$ has measure 2. We have seen
that $x$ is measurable with measure $\le 2$.

Assume now that $x$ has measure $\le 2$. Then $x$ is the sum of $\le
2$ nonzero functional elements, and we mentioned at the beginning of
\refS{sec4} that each of these functional elements is an atom. Since
$x$ is a subidentity atom, it is nonzero and functional. Now,
$x;1;x=x;\ident;x + x;\di;x$, where $x=x;\ident;x$. Thus, one of the
functional elements below $x;1;x$ is $x$ itself. If $x$ has measure
1, then $x;1;x=x$ and so $x;\di;x=0$ hence functional. If $x$ has
measure 2, then the other functional element below $x;1;x$ must be
$x;\di;x$. So in both cases $x;\di;x$ is functional. We have already
seen that $x;\di;x$ is functional just in case $x$ is a pair.

We are ready to prove the lemma. Assume that $\Aa$ is pair-dense and
atomic. By definition and monotony, if $x$ is a pair, then each
nonzero element below it is also a pair. Therefore, $\ident$ is the
sum of pairs that are atoms. We have seen that all these atoms are
measurable with measure $\le 2$, hence (ii) holds. Assume now that
(ii) holds, then $\ident$ is the sum of measurable atoms with
measure $\le 2$. Each of these atoms is a pair, so $\Aa$ is
pair-dense.

Finally, notice that each group of order $\le 2$ is cyclic.
\end{proof}

In view of Lemma~\ref{pair-lem}, our \refT{rep} implies that all
atomic pair-dense relation algebras are completely representable,
and in fact essentially isomorphic to a group relation algebra where
the associated groups have order one or two. This gives a structural
description for atomic pair-dense relation algebras. Theorem 48 of
\cite{ma91} states that simple pair-dense relation algebras are
atomic, and Theorem 51 of \cite{ma91} states that for simple
pair-dense algebras $(\alpha)$ and $(\beta)$ are equivalent, where
$(\alpha)$ states that $\Aa$ is completely representable on the set
$U$, and $(\beta)$ states that the cardinality of $U$ is $n+2m$
where $n$ is the number of atomic pairs $x$ below $\ident$ for which
$x;\di;x$ is zero, and $m$ is the number for those where $x;\di;x$
is nonzero. Now, using \cite[Theorem 48]{ma91} and \refT{rep}, Lemma
\ref{pair-lem}, one can give an alternative proof for \cite[Theorem
51]{ma91}.

\end{document}